\documentclass[a4paper,12pt]{article}
\usepackage{amsthm}
\usepackage[all,cmtip]{xy} 
\usepackage{dsfont}
\usepackage{amsmath}
\usepackage{amssymb}

\setlength{\topmargin}{-.2in} \setlength{\oddsidemargin}{.0in}
\setlength{\textheight}{8.5in} \setlength{\textwidth}{6.35in}
\setlength{\footnotesep} {\baselinestretch\baselineskip}
\newlength{\abstractwidth}
\setlength{\abstractwidth}{\textwidth}
\addtolength{\abstractwidth}{-6pc}

\flushbottom \thispagestyle{empty} \pagestyle{plain}

\renewcommand{\thanks}[1]{\footnote{#1}} 

\newcommand{\be}{\begin{equation}}
\newcommand{\bea}{\begin{eqnarray}}
\newcommand{\eea}{\end{eqnarray}} \newcommand{\ee}{\end{equation}}
 
 \def\ba{\begin{eqnarray}}
\def\ea{\end{eqnarray}}

\newcommand{\Tr}{\textrm{Tr}}

\newcommand{\del}{\partial}

\newcommand{\ddt}{\frac{\del}{\del t}}
\newcommand{\C}{\mathbb{C}}


\def\C{{\bf C}}

\def\o{\omega}

\def\det{{\rm det}}

\def\log{\,{\rm log}\,}

\def\o{\omega}

\def\al{\alpha}
\def\b{\beta}

\def\o{\omega}

\def\ddt{{\partial\over\partial t}}

\def\ti{\tilde}

\def\R{{\bf R}}
\def\C{{\bf C}}

\def\[{{\bf [}}
\def\]{{\bf ]}}

\def\pl{\partial}



\begin{document}
\newtheorem{theorem}{Theorem}
\newtheorem{proposition}{Proposition}
\newtheorem{lemma}{Lemma}
\newtheorem{corollary}{Corollary}
\newtheorem{definition}{Definition}
\newtheorem{conjecture}{Conjecture}
\newtheorem{example}{Example}
\newtheorem{claim}{Claim}
\newtheorem{rmk}{Remark}

\begin{centering}
 
\textup{\LARGE\bf Remarks on the Yang-Mills flow on a compact K\"ahler manifold}

\vspace{5 mm}

\textnormal{\large Tristan C. Collins and Adam Jacob }

\begin{abstract}
{\small

We study the Yang-Mills flow on a holomorphic vector bundle $E$ over a compact K\"ahler manifold $X$.  We construct a natural barrier function along the flow, and introduce some techniques to study the blow-up of the curvature along the flow.  Making some technical assumptions, we show how our techniques can be used to prove that the curvature of the evolved connection is uniformly bounded away from an analytic subvariety determined by the Harder-Narasimhan-Seshadri filtration of $E$.  We also discuss how our assumptions are related to stability in some simple cases.  
 }

\end{abstract}

\end{centering}
 \begin{normalsize}

\section{Introduction}

A current theme in complex differential geometry is the connection between existence of canonical geometric structures and algebraic stability in the sense of geometric invariant theory (GIT).  This theme is in part motivated by the famous theorem of Donaldson-Uhlenbeck-Yau, which states that the existence of a Hermitian-Einstein connection on an indecomposable holomorphic vector bundle $E$ over a K\"ahler manifold $(X,\omega)$ is equivalent to the stability of $E$ in the sense of Mumford-Takemoto \cite{Don1, UY}.  This theorem was first observed by Narasimhan and Seshadri \cite{NS} in the case of complex curves, by Donaldson for algebraic surfaces \cite{Don1}, and by Uhlenbeck and Yau \cite{UY} in arbitrary dimension.  A heat flow approach to the existence of Hermitian-Einstein connections, related to the Yang-Mills flow, was introduced by Donaldson in \cite{Don1}.  This approach has been extended to several more general settings \cite{BaS, Don3, Simp, Siu}.

It is by now well known that the Yang-Mills flow on an irreducible, holomorphic vector bundle $E$ converges to a Hermitian-Einstein connection if and only if $E$ is stable in the sense of Mumford-Takemoto \cite{Don1, Don3, Siu}.  As a result, it is natural to study the limiting properties of the Yang-Mills flow when $E$ is not stable.  When $\dim_{\C}X =2$, this problem was studied extensively by Daskalopoulos and Wentworth \cite{DW, DW2}.  They found that many limiting properties of the Yang-Mills flow are determined by the algebraic structure of $E$.  More precisely, Daskalopoulos and Wentworth show that away from an analytic bubbling set the Yang-Mills flow converges to a Yang-Mills connection on the direct sum of the stable quotients of the graded Harder-Narasimhan-Seshadri filtration \cite{DW}.  We denote this direct sum of stable quotients by $Gr^{hns}(E)$, and denote the set where $Gr^{hns}(E)$ fails to be free by $Z_{alg}$.  In the later paper \cite{DW2}, Daskalopoulos and Wentworth show that the analytic bubbling set is precisely equal to $Z_{alg}$.  This provides a remarkable and deep connection between the limiting behaviour of the Yang-Mills flow and the GIT of the bundle $E$, see for example \cite{GSZ}.

An interesting open problem is to generalize the results of Daskalopoulos-Wentworth in the case when $\dim_{\C}X >2$.  When $\dim_{\C} X=2$, the singularities of a torsion-free, coherent sheaf occur at isolated points, and hence one can define a natural notion of algebraic multiplicity attached to the set $Z_{alg}$.  The precise result  of \cite{DW2} is that the mass of a bubble at a point $p \in X$ is precisely equal to the algebraic multiplicity at $p$.  The connection between these two quantities is provided by the Riemann-Roch theorem.  By contrast, in dimensions greater than 2, the singularities of a torsion-free coherent sheaf over a K\"ahler manifold $X$ are no longer isolated, and this poses a significant difficulty in generalizing the argument of \cite{DW2}.

For K\"ahler manifolds of arbitrary dimension, the second author partially generalized the work of Daskalopoulos and Wentworth, proving the limiting reflexive sheaf along the Yang-Mills flow is in fact isomorphic to $Gr^{hns}(E)$ \cite{J2, J3}. Precisely, given a subsequence of times $t_j$ along the Yang-Mills flow, following Hong-Tian \cite{HT} we define the analytic singular set (or bubbling set) to be:
\be
Z_{an} =\bigcap_{r>0}\{x\in X\,|\liminf_{j\rightarrow\infty}\,r^{4-2n}\int_{B_r(x)}|F_A(t_j))|^2\o^n\geq \varepsilon\},\nonumber
\ee
A large part of Hong and Tian's paper \cite{HT} is dedicated to proving certain properties of $Z_{an}$, however uniqueness and dependence on the choice of subsequence $t_j$ is left open.  They do show that along such a subsequence the Yang-Mills flow converges smoothly on $X\backslash Z_{an}$, modulo gauge transformations, to a Yang-Mills connection on a limit bundle $E_{\infty}$ on $X\backslash Z_{an}$. In \cite{BaS}, Bando and Siu prove this bundle extends to all of $X$ as a reflexive sheaf $\hat E_\infty$, and the second author proves in \cite{J3} that $\hat{E}_{\infty}\cong Gr^{hns}(E)^{**}$.  Since $E_{\infty}$ is locally free on $X\backslash Z_{an}$, the stalk of $Gr^{hns}(E)$ must be free away from $Z_{an}$. Denote the set where $Gr^{hns}(E)$ fails to be free by $Z_{alg}$; we refer to this set as the algebraic singular set. Then a corollary of the main result of \cite{J3} is that $Z_{alg}\subseteq Z_{an}$.  It is an interesting open problem to prove that $Z_{an} = Z_{alg}$, which, in particular, would imply the uniqueness of $Z_{an}$. In this paper, we provide some partial results towards this goal.

Before stating our main theorem, let us recall some basic definitions. Let $E$ be an indecomposable holomorphic vector bundle over a compact K\"ahler manifold $(X,\omega)$. One can always find a Harder-Narasimhan-Seshadri filtration, 
\be
0=S^0\subset S^1\subset S^2\subset\cdots\subset S^p=E,
\ee
defined to have torsion free, stable quotients $Q^i=S^i/S^{i-1}$. Such a filtration may not be unique, however the direct sum of stable quotients $Gr^{hns}(E):=\bigoplus_i Q^i$ is uniquely determined by the isomorphism class of $E$. It follows that the the {\it algebraic singular set} of $E$,  given explicitly by
\be
Z_{alg}:=\{x\in X|Gr^{hns}(E)_x {\,\,\,\,\rm is\, \,not\, \,free}\,\},\nonumber
\ee
is uniquely determined by the isomorphism class of $E$.  If $E$ is not stable it does not admit a Hermitian-Einstein connection, so we do not expect the Yang-Mills flow to converge smoothly to a limiting Yang-Mills connection.  In particular, we expect that bubbles should form in the limit as $t\rightarrow \infty$.

We now state our main result, which we view as a first step towards proving $Z_{an}=Z_{alg}$. For simplicity, we only state the theorems for Harder-Narasimhan-Seshadri  filtrations of length one, given by $0\subset S\subset E$, with the general case following by induction. Let $\gamma$ be the second fundamental form associated to the subsheaf $S$.
\begin{theorem}
\label{ZanZalg}
Suppose that $E$ is a holomorphic vector bundle with $Gr^{hns}(E) = S^{1} \oplus S^{2}$, where $S^{1} \subset E$ and $S^{2} = E/S^{1}$. 
Let $H(t_j)$ be any sequence of metrics along the Donaldson heat flow. Assume that the following estimates hold uniformly along the flow
\begin{enumerate}
\item[(A)]
$|\nabla^{0} h^{S^{i}}| \leq C\sigma^{-k}{\rm Tr}(h^{S^{i}})$
\item[(B)]
$h^{S^{1}} \geq c\sigma^{k}\|{\rm Tr}(h^{S^{1}})\|_{L^{2}(X)} \quad \text{ and } h^{S^{2}} \leq C \sigma^{-k} \|{\rm Tr}(h^{S^{2}})\|_{L^{2}(X)}$
\item[(C)]
$|\gamma|^{2} \leq C\sigma^{-k}$
\end{enumerate}
for some $k>0$, $i=1,2$.  Here, $\sigma$ is the barrier function constructed in Definition~\ref{def: barrier}. Then the analytic singular set is the same as the algebraic singular set of $E$:
\be
Z_{an} =Z_{alg}.\nonumber
\ee
\end{theorem}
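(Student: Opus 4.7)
The inclusion $Z_{alg}\subseteq Z_{an}$ is already established in \cite{J3}, so my plan is to prove the reverse inclusion $Z_{an}\subseteq Z_{alg}$ by showing that assumptions (A), (B), (C) force $|F_{A(t_{j})}|$ to stay locally uniformly bounded outside $Z_{alg}$ along the flow. More precisely, I would derive a pointwise estimate
\be
|F_{A(t_{j})}|^{2}\leq C\,\sigma^{-N}\nonumber
\ee
for some $N$ depending only on $k$ and the rank of $E$, with $C$ independent of $j$. Because the barrier $\sigma$ of Definition~\ref{def: barrier} vanishes exactly on $Z_{alg}$, for any $x_{0}\notin Z_{alg}$ one has $\sigma\geq\sigma_{0}>0$ on some ball $B_{r_{0}}(x_{0})$, and hence
\be
r^{4-2n}\int_{B_{r}(x_{0})}|F_{A(t_{j})}|^{2}\omega^{n}\leq C(x_{0})\,r^{4}\longrightarrow 0\nonumber
\ee
as $r\to 0$, uniformly in $j$. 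This forces $x_{0}\notin Z_{an}$ and yields the desired inclusion.

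To prove the curvature estimate I would decompose $F_{A}$ relative to the HNS filtration $0\subset S^{1}\subset E$. On the open set $X\setminus Z_{alg}$ both $S^{1}$ and the quotient $S^{2}=E/S^{1}$ are honest holomorphic subbundles of $E$, and the Chern connection of $H(t_{j})$ is block-triangular with diagonal blocks $A^{S^{1}},A^{S^{2}}$ and off-diagonal block the second fundamental form $\gamma$. The Gauss--Codazzi identity then gives
\be
|F_{A}|^{2}\leq C\bigl(|F_{S^{1}}|^{2}+|F_{S^{2}}|^{2}+|\gamma|^{4}+|D\gamma|^{2}\bigr).\nonumber
\ee
Hypothesis (C) controls the $|\gamma|^{4}$ term directly. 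For $|D\gamma|^{2}$, I would express $\gamma$ in a fixed holomorphic frame in terms of $h^{S^{i}}$ and its first derivative, differentiate, and combine (A) with (B) to produce a bound of the same form $C\sigma^{-N}$.

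The remaining terms $|F_{S^{i}}|^{2}$ are the most delicate and constitute the main obstacle. Since $F_{S^{i}}=\bar\partial(h^{S^{i},-1}\partial h^{S^{i}})$ involves second derivatives of $h^{S^{i}}$, hypothesis (A) alone, which gives only first-order metric control, is not sufficient. My plan is to exploit the Donaldson heat equation $H^{-1}\partial_{t}H=-2(\Lambda F_{H}-\mu\cdot\mathrm{Id})$, applied to the $S^{i}$-block, to convert time-derivative bounds on $h^{S^{i}}$ into $L^{\infty}$-bounds on the mean curvature $\Lambda F_{S^{i}}$, and then to apply a Bochner/maximum-principle argument to $|F_{S^{i}}|^{2}$ along the induced flow, using stability of $S^{i}$ to control the long-time behaviour. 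The principal difficulty lies in tracking how the coupling through the second fundamental form $\gamma$ enters the induced flow on $S^{i}$ as a lower-order perturbation without producing worse than polynomial blow-up in $\sigma^{-1}$, and keeping the resulting power $N$ finite and independent of $j$.
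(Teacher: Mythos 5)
Your overall skeleton is the same as the paper's: take $Z_{alg}\subseteq Z_{an}$ from \cite{J3}, decompose the curvature along $0\subset S^{1}\subset E$ on $X\setminus Z_{alg}$ into $F^{S^{1}},F^{S^{2}},\gamma\wedge\gamma^{\dagger},\nabla\gamma$, and conclude $Z_{an}\subseteq Z_{alg}$ once the curvature is locally uniformly bounded off $Z_{alg}$ (your scaling observation $r^{4-2n}\int_{B_{r}}|F|^{2}\leq Cr^{4}$ is fine). But the two steps that carry all the weight are not actually carried out, and as proposed they would fail. First, your treatment of $|D\gamma|^{2}$: you say you would write $\gamma$ in terms of $h^{S^{i}}$ and its first derivatives, ``differentiate, and combine (A) with (B).'' Differentiating $\gamma$ once produces second derivatives of $h^{S^{i}}$, and hypothesis (A) is only a first-order bound; nothing in (A), (B), (C) gives pointwise second-order control of $h^{S^{i}}$, so no estimate of the form $|\nabla\gamma|^{2}\leq C\sigma^{-N}$ comes out of this. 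In the paper, $\nabla\gamma$ is precisely the term that cannot be handled pointwise from the hypotheses: it is controlled only at the end, by computing the evolution inequality \eqref{nablagamma} and running a parabolic Moser iteration, where the needed integrability comes from the fact that $|\nabla\gamma|^{2}$ is a component of $|F|^{2}$ and hence uniformly in $L^{1}$ by the monotonicity of the Yang--Mills energy.

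Second, for $|F^{S^{i}}|^{2}$ you propose converting the heat equation into an $L^{\infty}$ bound on $\Lambda F^{S^{i}}$ and then running a Bochner/maximum-principle argument, and you acknowledge you cannot close the coupling through $\gamma$. This is indeed where your route breaks: the evolution inequality for $|F^{S}|^{2}$ (equation \eqref{FS}) contains $|F^{S}|^{3}$ and mixed terms such as $|\bar\nabla\nabla\gamma||\gamma||F^{S}|$, so a maximum principle on $X\setminus Z_{alg}$ has no way to absorb them and no boundary control near $Z_{alg}$, where $\sigma\to 0$. The paper's actual mechanism is different and you should note it: it never proves a pointwise bound $|F^{S^{i}}|^{2}\leq C\sigma^{-N}$. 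Instead it (i) proves a $C^{0}$ bound for ${\rm Tr}(h^{S})$ of the form $C\sigma^{-\gamma}\|{\rm Tr}(h^{S})\|_{L^{2}}$ by an \emph{elliptic} Moser iteration on the sets $\{\sigma\geq\epsilon\}$ using the barrier; (ii) combines this with (A), (B), (C) and elliptic regularity for the equation $\Delta_{0}h^{S}=g^{j\bar k}\nabla_{\bar k}h^{S}(h^{S})^{-1}\nabla^{0}_{j}h^{S}+h^{S}(\Lambda F^{S}_{0}-\Lambda F^{S}_{t})$ to get uniform $L^{p}_{2}(K)$ bounds on the normalized $\ti h^{S^{i}}$, hence only local $L^{p}$ bounds on $F^{S^{i}}$ and $\gamma$ (Proposition~\ref{prop: curv est}); and (iii) feeds these $L^{p}$ bounds, as the $L^{q}$ coefficient $\Theta$, into a parabolic Moser iteration (Theorem~\ref{moser}) applied to $u=|\nabla\gamma|^{2}+|F^{S}|^{2}+|F^{Q}|^{2}+1$, whose uniform $L^{1}$ bound comes from the bounded Yang--Mills energy. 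Without an argument of this kind — local $L^{p}$ control of the coefficients plus an iteration exploiting the a priori $L^{1}$ bound on the curvature density — your proposal does not yield the curvature bound it needs, so as written it has a genuine gap at both of its central steps.
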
 
We make some remarks about our assumptions.  First, the polynomial dependence on $\sigma$ is not important; essentially any estimate in terms of $\sigma$ would suffice.  Secondly, the estimates in condition $(B)$ are clearly stability related.  Indeed, when $E$ is stable, so that $S^{1}=E$ and $S^{1} = \emptyset$, then $(B)$ is a consequence of the work of Uhlenbeck-Yau \cite{UY}. When $S$ and $Q$ are locally free on all of $X$, then assumptions $(B)$ and $(C)$ imply condition $(A)$ (see Proposition~\ref{B-AC}).  We can also show that converse.  Namely, if $S$ and $Q$ are locally free, condition $(A)$ implies both $(B)$ and $(C)$ (see Proposition~\ref{prop: lower bd}), an argument which makes essential use of stability.  We expect that the techniques used in the locally free setting can be generalized to the case when $S, Q$ are not locally free, an issue to which we hope to return in the future.  We point out that the estimate in $(A)$ seems difficult to access, and so estimates $(B)$ and $(C)$ seem to be the essential missing ingredients.  While $(B)$ should follow from stability (we have made some progress showing this), we have been unable to make progress on an a priori estimate for the second fundamental form.  Although we are unable to solve the problem in full generality, we consider our results valuable in the techniques employed in their proofs. In particular, we derive  estimates using a barrier function $\sigma$ constructed from an algebraic object on $E$, namely the determinant bundles of quotients of the Harder-Narasimhan-Seshadri filtration.  Moreover,  we expect the computations in sections~\ref{evolve section} and ~\ref{parabolic} to  be useful in the future. We hope that our techniques will be useful in proving a full generalization of Daskalopoulos and Wentworth's result \cite{DW2}. 

We now outline the main idea in the proof of Theorem \ref{ZanZalg}.  Working away from $Z_{alg}$, we need to bound the evolving curvature along the Yang-Mills flow in $L^p$ for any $p$. Assuming the bound on $|\gamma|^2$, we decompose the curvature onto the stable subsheaf $S$ and stable quotient $Q$, denoting the induced curvature on those sheaves as $F^S$ and $F^Q$. From here we see an $L^p$ bound on the induced curvatures $F^S$ and $F^Q$, along with an $L^p$ bound on the derivative $\nabla\gamma$, will yield Theorem \ref{ZanZalg}. First, we bound $F^S$ and $F^Q$ in $L^{p}_{loc}(X\backslash Z_{alg})$.  In this part of the argument we make use of assumptions $(A), (B)$ and $(C)$, along with a Moser iteration argument using the barrier function $\sigma$. Once these bounds are attained, we compute the heat operator on several important terms, deriving an inequality suited for a parabolic Moser iteration argument, which we use to bound $\nabla\gamma$ in $C^0$ (and as a result $L^p$ for any $p$).

It is in this step we make fundamental use of the stability of $S$ and $Q$. Once these bounds are attained, we compute the heat operator on several important terms, deriving an inequality suited for a parabolic Moser iteration argument, which we use to bound $\nabla\gamma$ in $C^0$ (and as a result $L^p$ for any $p$). The case where the Harder-Narasimhan-Seshadri filtration has general length follows by an induction argument similar to the one used in $\cite{DW}$ and $\cite{J3}$. 

The organization of the paper is as follows.  In Section~\ref{background} we provide background for the basic objects we will need in the proof. In Section~\ref{barrier section} we construct the barrier function $\sigma$ that is used in the analysis in later sections.   The $L^p$ bounds away from $Z_{alg}$ for the induced curvature are derived  in Section \ref{inducedcurvesection}.  It is in this section where we discuss the relationship between the assumptions $(A), (B)$ and $(C)$.  Here we point out the essential use of stability.   Finally, in Section~\ref{evolve section}  we compute the heat operator on several curvature and second fundamental form terms. Those equations are used in Section \ref{parabolic} to apply our parabolic Moser iteration argument, completing the proof of Theorem~\ref{ZanZalg}.

\medskip
\begin{centering}
{\bf Acknowledgements}
\end{centering}
\medskip

Both authors would like to thank D.H. Phong for his advice and support.  The authors would also like to thank V. Tosatti for many helpful comments and stimulating discussions.  The authors thank A. Waldron for some helpful comments, and R. Friedman for several very helpful suggestions.

\section{Background}\label{background}
\subsection{The Yang-Mills flow and the Donaldson heat flow}

We begin with a brief introduction to the Yang-Mills flow, and highlight the importance of its relation to the Donaldson heat flow. 

Let $X$ be a compact K\"ahler manifold, and assume the K\"ahler form $\o$ is normalized so $X$ has volume one.  The Yang-Mills flow is a flow of connections $d_{A}:= d + A$ on $E$, where $d_{A}: E \rightarrow E\otimes \Omega^{1}$.  Because $X$ is a complex manifold, this map decomposes into $(1,0)$ and $(0,1)$ parts.  In particular, the connection coefficients decompose as $A=A'+A''$, where $A'$ represents the $(1,0)$ part and $A''$ represents the $(0,1)$ part of $A$.  Thus $d_{A} = \pl_{A}+ \bar\pl_{A}$, where $\pl_A:=\pl+A'$ and $\bar\pl_A:=\bar\pl+A''$. We say $A$ is {\it integrable} if $\bar\pl_A^2=0$, which implies $\bar\pl_A$ defines a holomorphic structure on $E$. For a fixed metric $H_0$, we say a connection is unitary if it is compatible with the metric,  and we denote the space of integrable unitary connections by $\mathcal{A}^{1,1}$. The curvature of a connection, denoted $F_{A}$, is a section of ${\rm End}(E)\otimes \Omega^{1,1}$, and is defined by:
\be
F_A:=\bar\pl A'+\pl A''+A''\wedge A'+A'\wedge A''.\nonumber
\ee
The Yang-Mills functional $YM:\mathcal{A}^{1,1}\longrightarrow \R$ is defined to be the $L^2$ norm of the curvature:
\be
YM(A):=||F_A||^2_{L^2}.\nonumber
\ee
On a general complex manifold, the Yang-Mills flow is the gradient flow of this functional, and is given by:
\be
\dot A=-d_A^*\,F_A.\nonumber
\ee
On a K\"ahler manifold we can rewrite the equation for the flow using Bianchi's second identity ($d_AF_A=0$) and the K\"ahler identities:
\be
\label{YMF2}
\dot A=i\bar\pl_A\Lambda F_A-i\pl_A\Lambda F_A.
\ee
From this formulation one can check that if $A(0)\in\mathcal{A}^{1,1}$, then $A(t)$ is an integrable, unitary connection for all time $t\in [0,\infty)$. Now if $E$ is stable, it was first shown by Donaldson in \cite{Don1} that the Yang-Mills flow converges to a Hermitian-Einstein connection. However, since we are assuming $E$ is not stable, we do not expect the flow to converge to a limiting Hermitian-Enstien connection. In fact, our main object of study is the set of points on the base manifold $X$ where the curvature blows up along the flow.
 \begin{definition}
  Given a sequence of connections $A(t_j)$ along the Yang-Mills flow, the analytic singular set of $E$ (sometimes called the bubbling set) is defined by:
\be
Z_{an} =\bigcap_{r>0}\{x\in X\,|\liminf_{j\rightarrow\infty}\,r^{4-2n}\int_{B_r(x)}|F_A(t_j)|^2\o^n\geq \varepsilon\}
\ee
for some fixed $0<\varepsilon\leq\varepsilon_0 $, where $\varepsilon_0\ll 1$ depends only on $X$. 
\end{definition}
For a precise definition of $\varepsilon_0$, we direct the reader to the proof of Proposition 6 in \cite{HT}.  In the K\"ahler setting, the Yang-Mills flow is closely related to the Donaldson heat flow.  In fact, it is through this relationship that many important properties of the Yang-Mills flow were first realized, such as long time existence and convergence. 
We follow the viewpoint introduced by Donaldson in \cite{Don1}, and direct the reader to that reference for further detail. Starting with a fixed initial metric $H_0$ on $E$, any other metric $H$ is related to $H_{0}$ by an endomorphism $h=H_0^{-1} H$. Conversely, any positive definite Hermitian endomorphism $h$ defines a metric $H=H_0h$.  
\begin{definition} Let $\mathds{1}$ denote the identity map in End(E). The Donaldson heat flow is a flow of endomorphisms $h=h(t)$ given by:
\be
\label{DHF}
h^{-1}\dot h=-(\Lambda F-\mu(E) \mathds{1}),\nonumber
\ee
with initial condition $h(0)=\mathds{1}$.  Here, $F$ is the curvature of the unitary Chern connection of the metric $H(t)=H_0 h(t)$.
\end{definition} 
A unique smooth solution of the flow exists for all $t\in[0,\infty)$, and on any stable bundle this solution will converge to a smooth Hermitian-Einstein metric \cite{Don1, Don3, Simp, Siu}. In fact, one can use a solution $h(t)$ to \eqref{DHF} to construct a solution $A(t)$ to the Yang-Mills flow. Let $A_0$ be an initial connection in $\mathcal{A}^{1,1}$, and consider the one parameter family of holomorphic structures
$\bar\pl_t=\bar\pl+A_t'',\nonumber$
where $A_t''$ is defined by the action of $w=h^{1/2}$ on $A_0''$. Explicitly, this action is given by:
\be
\label{action}
A_t''=wA_0''w^{-1}-\bar\pl ww^{-1},
\ee
which is equivalent to:
\be
\bar\pl_t:=w\circ\bar\pl_0\circ w^{-1}.\nonumber
\ee
Using this one-parameter family of holomorphic structures and the metric $H_0$, we define a one-parameter family of unitary connections $A_t$, and one can check that $A_t$ evolves by the Yang-Mills flow. Conversely, any one-parameter path in $\mathcal{A}^{1,1}$ along the Yang-Mills flow defines an orbit of the complexified gauge group, which gives rise to a solution of the Donaldson heat flow.  The curvature of $F$ along the Donaldson heat flow is related to the curvature $F_A$ along the Yang-Mills flow by the following relation:
\be
\label{GRL}
F_A=w \,F\,w^{-1}.
\ee
An important consequence of this relationship is that the norm of the curvature along the Yang Mills flow given by the fixed metric $H_0$ is equivalent to the norm of the curvature along the Donaldson heat flow given by the evolving metric $H$. Let $(\cdot)^\dagger$ denote the adjoint of an endomorphism with respect to the fixed metric $H_0$,  and let $(\cdot)^*$ denote the adjoint with respect to the evolving metric $H$. For any endomorphism $M$, these two adjoints are related as follows: $M^\dagger=hM^*h^{-1}$. We then see:
\be
|F|_{H}^2={\rm Tr}(FF^*)={\rm Tr}(w^{-1}F_Aw(w^{-1}F_Aw)^*)={\rm Tr}(F_AhF_A^*h^{-1})={\rm Tr}(F_AF_A^\dagger)=|F_A|^2_{H_0}.\nonumber
\ee
Thus from the point of view of uniform curvature bounds, it suffices to prove bounds along either the Donaldson heat flow or the Yang-Mills flow, provided we always compute the norm with the right metric. 

We conclude this section with a simple curvature bound along the Donaldson heat flow.
\begin{lemma}
Along the Donaldson heat flow, there is a constant $C$ so that $|\Lambda F|_{H(t)}$ is uniformly bounded.
\end{lemma}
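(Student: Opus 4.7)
The plan is to introduce the Hermitian-Einstein defect $K := \Lambda F - \mu(E)\mathds{1}$ and show that its pointwise norm (measured with the evolving metric $H(t)$) is non-increasing along the Donaldson heat flow, by way of a maximum principle applied to a parabolic inequality. This immediately yields $|\Lambda F|_{H(t)} \leq |K|_{H(0)} + |\mu(E)|$, which is the desired uniform bound.

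First, I would derive the evolution equation for $K$ along \eqref{DHF}. Writing $h^{-1}\dot h = -K$ and differentiating the identity $F = \bar\partial(H^{-1}\partial H)$ for the Chern curvature of $H(t)=H_0 h(t)$, a standard computation (using $[\Lambda,\bar\partial] = -i\partial_A^{*}$ and the other K\"ahler identities) produces
\be
\dot K = \Delta_{\bar\partial_A} K,\nonumber
\ee
where $\Delta_{\bar\partial_A}=\bar\partial_A\bar\partial_A^{*}+\bar\partial_A^{*}\bar\partial_A$ is the $\bar\partial$-Laplacian of the Chern connection of $H(t)$. The derivation is the one given in Section~3 of Donaldson \cite{Don1}; crucially the constant $\mu(E)\mathds{1}$ is annihilated by $\bar\partial_A$ so it drops out of the equation after the first variation.

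Next I would pair this evolution equation with $K$ using the Hermitian inner product induced by $H(t)$. The key point is that, because $H(t)$ varies along the flow, one has the identity
\be
\ddt |K|_{H(t)}^2 = 2\,\Re\,\langle \dot K, K\rangle_{H(t)} - 2\langle K^2, K\rangle_{H(t)} + 2\langle K^2, K\rangle_{H(t)} = 2\,\Re\,\langle \dot K, K\rangle_{H(t)},\nonumber
\ee
where the $h^{-1}\dot h = -K$ terms cancel because $K$ is self-adjoint with respect to $H(t)$. Combining with the evolution equation above and a Bochner/Kato computation gives
\be
\left(\ddt - \Delta\right) |K|_{H(t)}^2 = -2|\nabla_A K|_{H(t)}^2 \leq 0,\nonumber
\ee
where $\Delta$ is the ordinary Laplacian on functions on $X$.

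Finally, I apply the parabolic maximum principle to the scalar function $|K|_{H(t)}^2$ on the compact manifold $X$. This forces $\sup_X |K|_{H(t)}^2$ to be non-increasing in $t$, hence uniformly bounded by $\sup_X |K|_{H(0)}^2$. Using $|\Lambda F|_{H(t)} \leq |K|_{H(t)} + |\mu(E)|$ and taking $C := \sup_X |K|_{H(0)}+|\mu(E)|$ gives the lemma. The main obstacle, if any, is bookkeeping rather than mathematical: one must carefully track how the adjoint $(\cdot)^{*}$ depends on $t$ to see that the $\dot H$ contributions to $\ddt|K|^{2}_{H(t)}$ cancel, which is a well-known but delicate feature of the Donaldson heat flow computation.
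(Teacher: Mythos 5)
Your proposal is correct and takes essentially the same route as the paper: the paper simply computes $(\pl_t-\Delta)|\Lambda F|^2_{H}=-|\nabla\Lambda F|_H^2-|\overline{\nabla}\Lambda F|_H^2\leq 0$ (citing \cite{J1} for the details) and applies the maximum principle, the only cosmetic difference being that you work with $K=\Lambda F-\mu(E)\mathds{1}$ instead of $\Lambda F$ itself. One small bookkeeping caution: with $\Delta_{\bar\partial_A}=\bar\partial_A\bar\partial_A^{*}+\bar\partial_A^{*}\bar\partial_A$ the \emph{nonnegative} Hodge Laplacian, the evolution equation should read $\dot K=-\Delta_{\bar\partial_A}K$ (equivalently $\dot K=g^{j\bar k}\nabla_{\bar k}\nabla_j K$), which is what makes $|K|^2_{H(t)}$ a subsolution of the heat equation.
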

\begin{proof}
We have the following simple computation for the heat operator on $|\Lambda F|^2_{H}$ (for details see \cite {J1}):
\be
(\pl_t-\Delta)|\Lambda F|^2_{H}=-|\nabla\Lambda F|_H^2-|\overline{\nabla}\Lambda F|_H^2\leq 0.\nonumber
\ee
The lemma follows from the maximum principle. 
\end{proof}

\subsection{Quotients, filtrations and stability}
In this section we introduce the algebraic singular set, show it is uniquely determined by the isomorphism class of $E$, and provide a local analytic description. We begin by recalling the definitions of slope and stability.

Given a torsion free sheaf $\mathcal{E}$, we can define its first Chern class by $c_1(\mathcal{E}):=c_1(det(\mathcal{E}))$, since $det(\mathcal{E})$ is always a line bundle. The slope of $\mathcal{E}$ is then given by:
\be
\mu(\mathcal{E}):=\frac{1}{rk(\mathcal{E})}\int_Xc_1(det(\mathcal{E}))\wedge\o^{n-1}.\nonumber
\ee
We say $\mathcal{E}$ is stable if for every torsion free subsheaf $\mathcal{F}\subset\mathcal{E}$ the inequality $\mu(\mathcal{F})<\mu(\mathcal{E})$ holds. $\mathcal{E}$ is semi-stable if the weak inequality $\mu(\mathcal{F})\leq\mu(\mathcal{E})$ holds.

Next we introduce the Harder-Narasimhan filtration, and recall some of its properties. The following proposition can be found in \cite{Kob}.
\begin{proposition}[\cite{Kob}, Theorem (7.15)] 
Any torsion-free sheaf $E$ carries a unique filtration of subsheaves
\begin{equation}\label{filter}
0=S^{0} \subset S^{1}\subset S^{2} \subset \cdots \subset S^{p} = E,
\end{equation}
 called the \emph{Harder-Narasimhan} filtration of E, such that the quotients $Q^{i} = S^{i}/S^{i-1}$ are torsion-free and semi-stable.  Moreover, the quotients are slope decreasing, satisfying $\mu(Q^{i}) > \mu(Q^{i+1})$, and the associated graded object $Gr^{hn}(E) := \bigoplus_{i=1}^p Q^{i}$ is uniquely determined by the isomorphism class of $E$.
\end{proposition}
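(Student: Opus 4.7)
The plan is to construct the filtration inductively by producing at each stage a canonical ``maximal destabilizing'' subsheaf, and then to verify that the resulting filtration is forced to be unique up to the associated graded.

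First I would construct $S^{1}\subset E$ as follows. Consider the collection $\cS$ of nonzero torsion-free subsheaves $F\subset E$, and the corresponding slopes $\{\mu(F):F\in\cS\}$. Granting for the moment that this set is bounded above, let $\mu_{\max}:=\sup_{F\in\cS}\mu(F)$ and choose $S^{1}$ to attain this supremum and have maximal rank among such subsheaves. One then checks: (i) $S^{1}$ may be taken saturated, since saturation preserves slope and weakly increases rank, and saturatedness forces $E/S^{1}$ to be torsion-free; (ii) $S^{1}$ is semi-stable, since every nonzero $F\subset S^{1}$ satisfies $\mu(F)\leq \mu_{\max}=\mu(S^{1})$; (iii) $S^{1}$ is unique, for if $S'$ were another maximizer the exact sequence
\be
0\longrightarrow S^{1}\cap S'\longrightarrow S^{1}\oplus S'\longrightarrow S^{1}+S'\longrightarrow 0\nonumber
\ee
together with additivity of rank and degree would force $\mu(S^{1}+S')=\mu_{\max}$ with $\rk(S^{1}+S')\geq\rk(S^{1})$, and saturation then pins $S^{1}=S'$.

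With $S^{1}$ in hand, I would iterate on $E/S^{1}$, which is torsion-free of strictly smaller rank, defining $S^{2}\subset E$ as the preimage of the maximal destabilizer of $E/S^{1}$, and continuing until the process terminates after at most $\rk(E)$ steps. The slope-decreasing property $\mu(Q^{i})>\mu(Q^{i+1})$ is automatic: were $\mu(Q^{i+1})\geq\mu(Q^{i})$, the preimage in $E/S^{i-1}$ of $Q^{i+1}$ would be a subsheaf strictly containing $Q^{i}$ whose slope is at least $\mu(Q^{i})$, contradicting the maximality used to select $Q^{i}$. Uniqueness of the entire filtration, and hence of $Gr^{hn}(E)$ up to isomorphism, then follows from uniqueness of the maximal destabilizer applied inductively at each stage.

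The main obstacle is the initial claim $\mu_{\max}<\infty$. On a projective manifold one could invoke Grothendieck's boundedness of the relevant Quot scheme, but in the purely K\"ahler setting one must bound $c_{1}(F)\cdot[\omega]^{n-1}$ directly for saturated torsion-free subsheaves $F\subset E$ of fixed rank. The standard approach is to fix a smooth Hermitian metric $H_{0}$ on $E$, restrict it to $F$ on the locus where $F$ is locally free, and use a Chern--Weil computation together with the sign of the second fundamental form contribution to bound $\deg(F)$ above in terms of $H_{0}$, $\rk(F)$ and $[\omega]$. The fact that the supremum is actually attained (and not merely finite) requires the further observation that the degrees of saturated subsheaves of $E$ of fixed rank form, under suitable normalization, a discrete subset of $\R$, so that a supremum over the finitely many admissible ranks is realized by an actual subsheaf.
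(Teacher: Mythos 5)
The paper gives no proof of this proposition at all: it is quoted verbatim from \cite{Kob}, Theorem (7.15). So what you are really doing is reconstructing the textbook argument, and most of your outline is the standard one and is fine in outline: the choice of maximal destabilizer (maximal slope, then maximal rank, then saturate), the weighted-average argument for $\mu(Q^{i})>\mu(Q^{i+1})$, and the intersection/sum argument for uniqueness of the maximizer all go through (note that the strict-decrease argument really uses \emph{both} maximality of slope and of rank, as you implicitly do). One smaller point: inductive uniqueness of the maximal destabilizer only shows that the \emph{canonical} filtration is well defined; the proposition asserts that \emph{any} filtration with torsion-free, semistable, slope-decreasing quotients coincides with it, and for that you still need the standard comparison lemma (e.g. $\mathrm{Hom}(F,G)=0$ for semistable $F,G$ with $\mu(F)>\mu(G)$) to identify the first step of an arbitrary such filtration with the maximal destabilizer before inducting.

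The genuine gap is the attainment of $\mu_{\max}$. You justify it by asserting that the degrees of saturated subsheaves of fixed rank form a discrete subset of $\mathbf{R}$. In the paper's setting, an arbitrary compact K\"ahler manifold, this is false: $\deg F=\int_X c_1(\det F)\wedge\omega^{n-1}$ with $c_1(\det F)$ ranging in the finitely generated group $\mathrm{NS}(X)$, and the image of such a group under pairing with $[\omega]^{n-1}$ is in general a finitely generated but \emph{dense} subgroup of $\mathbf{R}$ (already for tori or K3 surfaces of N\'eron--Severi rank at least $2$ with an irrational K\"ahler class). Discreteness is available when $[\omega]$ is an integral class on a projective manifold, which is presumably the situation you had in mind, but that proves a weaker statement than the one the paper invokes. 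Without discreteness, your (correct) Chern--Weil upper bound on slopes does not rule out a sequence of saturated subsheaves whose slopes increase strictly to the supremum without attaining it, and then the construction of $S^{1}$ never gets started. The missing ingredient -- and the real content of the cited theorem -- is a finiteness or compactness statement: in the projective case Grothendieck's boundedness lemma for saturated subsheaves of fixed rank and bounded degree, and in the K\"ahler case the corresponding argument supplied by \cite{Kob} (showing, e.g., that only finitely many determinant classes can occur with degree bounded below, or extracting an actual maximizer from a maximizing sequence by a weak-compactness argument). As written, the attainment step would fail.
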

We sometimes abbreviate this filtration as the HN filtration.  For our purposes having semi-stable quotients is not good enough, and we must take the filtration one step further:
\begin{proposition}[\cite{Kob}, Theorem (7.18)] Given a semi-stable sheaf $\mathcal{Q}$, there exists a filtration by subsheaves, called the \emph{Seshadri filtration}:
\begin{equation*}
0 = \tilde{S}^{0} \subset \tilde{S}^{1}\subset \cdots \subset \tilde{S}^{q}=\mathcal{Q},
\end{equation*}
such that $\mu(\tilde{S}^{i}) = \mu(\mathcal{Q})$ for all $i$, and each quotient $\tilde{Q}^{i} = \tilde{S}^{i}/\tilde{S}^{i-1}$ is torsion-free and stable. Furthermore, the direct sum of the stable quotients, denoted $Gr^{s}(\mathcal{Q}) := \bigoplus_{i=1}^q \tilde{Q}^{i}$,  is canonical and uniquely determined by the isomorphism class of $\mathcal{Q}$
\end{proposition}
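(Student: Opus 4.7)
The plan is to establish existence by induction on the rank of $\mathcal{Q}$, and then prove uniqueness of $Gr^{s}(\mathcal{Q})$ via a Jordan--H\"older style argument based on the fact that any nonzero morphism between stable torsion-free sheaves of equal slope is generically an isomorphism.

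For existence, first observe that if $\mathcal{Q}$ is already stable then the trivial filtration $0 \subset \mathcal{Q}$ works. Otherwise, semi-stability failing to be stability produces a proper nonzero torsion-free subsheaf $\mathcal{F} \subset \mathcal{Q}$ with $\mu(\mathcal{F}) = \mu(\mathcal{Q})$. Among all such $\mathcal{F}$ I would choose $\tilde{S}^{1}$ of minimal positive rank; this forces $\tilde{S}^{1}$ to be stable, since any proper nonzero subsheaf $\mathcal{G} \subset \tilde{S}^{1}$ satisfies $\mu(\mathcal{G}) \le \mu(\mathcal{Q})$ by semi-stability of $\mathcal{Q}$, and equality together with $\rk(\mathcal{G}) < \rk(\tilde{S}^{1})$ would contradict minimality. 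Next I would check that $\tilde{S}^{1}$ may be replaced by its saturation in $\mathcal{Q}$ without changing its slope, and that the quotient $\mathcal{Q}/\tilde{S}^{1}$ is then torsion-free and semi-stable with slope $\mu(\mathcal{Q})$: any subsheaf $\mathcal{G} \subset \mathcal{Q}/\tilde{S}^{1}$ pulls back to a subsheaf $\mathcal{G}'$ of $\mathcal{Q}$ containing $\tilde{S}^{1}$, and additivity of first Chern class together with $\mu(\mathcal{G}') \le \mu(\mathcal{Q})$ forces $\mu(\mathcal{G}) \le \mu(\mathcal{Q}/\tilde{S}^{1})$. Induction on rank then produces the desired filtration.

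For uniqueness of $Gr^{s}(\mathcal{Q})$ up to isomorphism I would compare two Seshadri filtrations $\{\tilde{S}^{i}\}$ and $\{\tilde{T}^{j}\}$ using the following key input: if $\mathcal{A}$ and $\mathcal{B}$ are torsion-free stable sheaves with $\mu(\mathcal{A})=\mu(\mathcal{B})$, then a nonzero map $\mathcal{A} \to \mathcal{B}$ must be injective (else its image would destabilize $\mathcal{B}$) and generically an isomorphism (else its saturated image would destabilize $\mathcal{A}$). Projecting $\tilde{S}^{1}$ into each successive quotient $\tilde{T}^{j}/\tilde{T}^{j-1}$ of the second filtration, I would locate an index $j_{0}$ for which the projection is generically an isomorphism, showing that $\tilde{S}^{1}$ and $\tilde{T}^{j_{0}}/\tilde{T}^{j_{0}-1}$ agree after passing to reflexive hulls. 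One then cancels these matched pieces from both filtrations and reduces to strictly smaller rank, whereupon induction finishes the argument.

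The principal obstacle I anticipate is the uniqueness step in the analytic, non-projective setting: one must handle torsion in quotients, saturation, and the fact that \emph{generically an isomorphism} only identifies two stable sheaves up to modification on a codimension-two set. In the projective case this is standard Jordan--H\"older theory for coherent sheaves, but transporting the argument to arbitrary compact K\"ahler manifolds requires the Bando--Siu extension theorem (or an equivalent input) to make sense of reflexive hulls and to ensure that the direct sum $Gr^{s}(\mathcal{Q})$ is well-defined as an isomorphism class of coherent sheaves independent of the chosen filtration.
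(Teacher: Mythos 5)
First, a point of reference: the paper does not prove this proposition at all; it is quoted verbatim from Kobayashi (Theorem 7.18 of Chapter V), so the comparison is with the standard argument there rather than with anything in the text. Your existence sketch is essentially that standard argument and is fine modulo a small ordering issue: you should pass to the saturation \emph{before} (or while) running the minimal-rank argument, since a priori replacing a stable subsheaf by its saturation could destroy stability; in fact the same minimality-of-rank argument applies directly to the saturation, because saturation preserves rank and, by semistability plus effectivity of the first Chern class of the torsion quotient, preserves the slope $\mu(\mathcal{Q})$.

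The genuine gap is in the uniqueness step, and you have put your finger on it yourself without resolving it. Your key lemma only yields that a nonzero map between equal-slope stable torsion-free sheaves is injective with cokernel supported in codimension at least two; it is \emph{not} an isomorphism in general (on a surface, $I_{p} \hookrightarrow \mathcal{O}_{X}$ is a nonzero map of rank-one, equal-slope stable sheaves that is not an isomorphism). Consequently the matching of $\tilde{S}^{1}$ with $\tilde{T}^{j_{0}}/\tilde{T}^{j_{0}-1}$ only identifies the two pieces after reflexivization, so the induction you describe proves at best that $Gr^{s}(\mathcal{Q})^{**}$ is determined by $\mathcal{Q}$, which is strictly weaker than the stated conclusion that $Gr^{s}(\mathcal{Q})$ itself is. This weakening is not cosmetic for the present paper: $Z_{alg}$ is defined as the locus where $Gr^{hns}(E)$ (not its double dual) fails to be free, so uniqueness of the reflexive hull alone would not even make $Z_{alg}$ well defined. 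Moreover, the cancellation step of a Jordan--H\"older argument requires an honest isomorphism of the matched graded pieces (or, alternatively, a careful verification that the second filtration induces a Seshadri filtration of $\mathcal{Q}/\tilde{S}^{1}$ whose graded object is the complementary sum), and as sketched this does not go through with only a generic isomorphism in hand; this is exactly the content one must take from Kobayashi's proof. Finally, the appeal to Bando--Siu is misplaced: double duals and reflexive hulls of coherent analytic sheaves exist on any compact complex manifold by pure sheaf theory, and no analytic extension theorem enters into this purely algebraic statement.
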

Combining these two propositions, we can construct the Harder-Narasimhan-Seshadri filtration, by finding a Seshadri filtration for each semi-stable quotient in the HN filtration. We sometimes refer to this double filtration as the HNS filtration. Consider the direct sum of stable quotients:
\begin{equation*}
Gr^{hns}(E) := \bigoplus_{k}\bigoplus_{i} \tilde{Q}^{i}_{k}.
\end{equation*} 
It is not hard to check that the Harder-Narasimhan-Seshadri filtration can be written as a single filtration of $E$ by torsion-free coherent sheaves:
\begin{equation}\label{filter}
0=S^{0}\subset S^{1} \subset \cdots \subset S^{p}=E,
\end{equation}
and in this case, setting $Q^{i} = S^{i}/S^{i-1}$ we have $Gr^{hns}(E) = \bigoplus_{i=1}^{p-1}Q^{i}$.  We are now ready for the following definition:
\begin{definition}
The algebraic singular set is defined to be:
\begin{equation*}
Z_{alg} := \{ x\in X \big| Gr^{hns}(E)_{x} \rm{\, \,is\,\, not\,\, free  }\}.
\end{equation*} 
\end{definition}

We would like to elucidate this definition by providing a useful local description of the algebraic singular set.  For the moment, let us focus on the simple case when $E$ has a stable subsheaf $S$ with stable quotient $Q$, so the HNS filtration of $E$ is given by:
\begin{equation*}
0 \subset S \subset E.
\end{equation*}
In this case, we have the exact sequence of torsion free, coherent sheaves
\begin{equation}\label{seq}
\xymatrix{0 \ar[r]& S \ar[r]^{B}& E \ar[r]^{p}& Q \ar[r]& 0},
\end{equation}
and $Gr^{hns}(E) = S\oplus Q$.  Since $S$ is coherent, over an open set $U$ where $S$ is locally free,  the inclusion $S\hookrightarrow E$ is given by a matrix of holomorphic functions $B = B^{\alpha}{}_{\beta}$.  Moreover, over a sufficiently small open set $U \subset X$,  $S$ has a finite length resolution 
\begin{equation}\label{res1}
\xymatrix{
0\ar[r]& \mathcal{O}_{U}^{\oplus r_{\ell}}\ar[r]& \mathcal{O}_{U}^{\oplus r_{\ell-1}}\ar[r]&\cdots \ar[r]&\mathcal{O}_{U}^{\oplus r_{1}}\ar[r]^{T}& S \ar[r]& 0}.
\end{equation}
Again, where $S$ is locally free, the surjection $\mathcal{O}_{U}^{\oplus r_1}\twoheadrightarrow S$ is given by a matrix of holomorphic functions $T = T^{\gamma}{}_{\delta}$.  The resolution of $S$ gives rise to a resolution for $Q$
\begin{equation*}
\xymatrix{
0\ar[r]& \mathcal{O}_{U}^{\oplus r_{\ell}}\ar[r]& \mathcal{O}_{U}^{\oplus r_{\ell-1}}\ar[r]&\cdots \ar[r]&\mathcal{O}_{U}^{\oplus r_{1}}\ar[r]^{B\circ T}& E \ar[r]& Q \ar[r] &0},
\end{equation*}
where now the map $\mathcal{O}_{U}^{\oplus r_1}\rightarrow E$ is the composition $B\circ T$.  Since this is a map between locally free sheaves, it is determined locally by a matrix of holomorphic functions.  The main technical result we need is the following theorem:
\begin{theorem}[\cite{Kob} Chapter 5, Theorem 5.8]
Let $\zeta$ be a coherent sheaf, $U \subset X$ an open set over which $\zeta$ has a finite resolution
\begin{equation*}
\xymatrix{
0\ar[r]& \mathcal{O}_{U}^{\oplus r_{\ell}}\ar[r]&\cdots \ar[r]& \mathcal{O}_{U}^{\oplus r_{2}}\ar[r]^{h}&\mathcal{O}_{U}^{\oplus r_{1}}\ar[r]& \zeta_{U} \ar[r]& 0}.
\end{equation*}
Then we have the following equality of sets;
\begin{equation}\label{kernel}
\{x\in U | \,\zeta_{x} {\rm \,\,is\,\, not\,\, free } \} = \{ x \in U | \text{ rank}(h(x)) < \max_{y\in U} \text{ rank} (h(y))\}.
\end{equation}
\end{theorem}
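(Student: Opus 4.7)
The plan is to prove both containments by analyzing the stalk $\zeta_x$ directly through the tail $\mathcal{O}_U^{\oplus r_2}\xrightarrow{h}\mathcal{O}_U^{\oplus r_1}\to\zeta_U\to 0$ of the resolution (the longer resolution enters only implicitly, to guarantee that $\zeta$ is coherent). First I would tensor this right-exact sequence at the stalk with the residue field $\mathbb{C}=\mathcal{O}_{U,x}/\mathfrak{m}_x$ to obtain
\begin{equation*}
\dim_{\mathbb{C}}\bigl(\zeta_x\otimes_{\mathcal{O}_{U,x}}\mathbb{C}\bigr)=r_1-{\rm rank}(h(x)),
\end{equation*}
which by Nakayama's lemma equals the minimal number of generators of $\zeta_x$. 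I would also record that $y\mapsto{\rm rank}(h(y))$ is lower semicontinuous (non-vanishing of a fixed $k\times k$ minor is an open condition), so $r^*:=\max_y{\rm rank}(h(y))$ is attained on an open set $V\subset U$, which is dense in every connected component.

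For the containment $\supseteq$, suppose ${\rm rank}(h(x))=r^*$. Then some $r^*\times r^*$ minor of $h$ is non-vanishing in a neighborhood of $x$, so after reordering rows and columns I may write $h=\bigl(\begin{smallmatrix}A&B\\ C&D\end{smallmatrix}\bigr)$ with $A$ an invertible holomorphic matrix on this neighborhood. The Schur-complement identity
\begin{equation*}
\begin{pmatrix}I&0\\-CA^{-1}&I\end{pmatrix}h\begin{pmatrix}I&-A^{-1}B\\0&I\end{pmatrix}=\begin{pmatrix}A&0\\0&D-CA^{-1}B\end{pmatrix},
\end{equation*}
whose two outer factors are automorphisms of $\mathcal{O}^{\oplus r_1}$ and $\mathcal{O}^{\oplus r_2}$ (each has unit determinant), combined with the fact that the total rank must remain $r^*$ throughout the neighborhood, forces $D-CA^{-1}B\equiv 0$ there. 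Hence $h$ is locally equivalent to $A\oplus 0$, and $\text{coker}(h)\cong\mathcal{O}^{\oplus(r_1-r^*)}$ is free near $x$.

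For the containment $\subseteq$, suppose $\zeta_x$ is free. Then $\zeta$ is locally free in some neighborhood $W$ of $x$, with locally constant rank on $W$; since $W$ meets the dense set $V$ on which the preceding paragraph gives $\zeta|_V\cong\mathcal{O}^{\oplus(r_1-r^*)}$, this common rank must equal $r_1-r^*$. Comparing with the stalkwise formula from the first step yields ${\rm rank}(h(x))=r^*$. The most delicate point in the argument is the Schur-complement step: one has to check that the conjugating matrices have entries in $\mathcal{O}_{U,x}$ (which is immediate once $\det A$ is a local unit) and that the pointwise vanishing of $D-CA^{-1}B$ at $x$ propagates to an identical vanishing on the neighborhood; both assertions follow from the openness of the maximum-rank locus. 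The opposite direction is in essence a Nakayama-style dimension count, and the higher terms of the resolution play no further role.
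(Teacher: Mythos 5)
The paper itself offers no proof of this statement (it is quoted from Kobayashi, Chapter 5, Theorem 5.8), so there is nothing internal to compare against; judged on its own, your argument is correct and is essentially the standard one: the fiber-dimension identity $\dim_{\mathbb{C}}(\zeta_x\otimes\mathbb{C})=r_1-\mathrm{rank}\,h(x)$ via right-exactness and Nakayama, lower semicontinuity of $\mathrm{rank}\,h$, and the Schur-complement (constant-rank) normal form showing that maximal rank at $x$ forces $\mathrm{coker}(h)\cong\mathcal{O}^{\oplus(r_1-r^*)}$ near $x$; you are also right that only the presentation $\mathcal{O}^{\oplus r_2}\xrightarrow{h}\mathcal{O}^{\oplus r_1}\to\zeta\to 0$ matters. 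Three touch-ups. First, your containment labels are swapped: the Schur-complement paragraph shows (maximal rank at $x$) $\Rightarrow$ ($\zeta$ free near $x$), which is the inclusion of the non-free locus \emph{into} the rank-drop locus, and your second paragraph gives the reverse inclusion; both implications are present, so the equality stands, but relabel them. Second, your claim that the maximal-rank locus $V$ is dense in \emph{every} connected component of $U$ is false when $U$ is disconnected (the rank of $h$ can be identically smaller on an entire component), and in that generality the theorem as literally stated also fails (e.g.\ $\zeta$ free on one component but presented there by a matrix of everywhere non-maximal rank); so either assume $U$ connected — which is how the paper uses the result, on sufficiently small neighborhoods — or interpret the maximum componentwise, and then your density claim (complement of $V$ is a proper analytic subset) and the rest go through. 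Third, the step ``$\zeta_x$ free implies $\zeta$ locally free on a neighborhood $W$'' deserves one line: lift a basis of $\zeta_x$ to sections near $x$, obtaining a map $\mathcal{O}^{\oplus k}\to\zeta$ whose kernel and cokernel are coherent with zero stalk at $x$, hence vanish near $x$ because their supports are closed analytic sets. With these small repairs the proof is complete.
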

In particular, it follows immediately that $Z_{alg}$ is an analytic subset of $X$.  In our setting, we note that any point where $Q_{x}$ is free, the stalk $S_{x}$ is free as well.  In particular, we have,
\begin{corollary}
On a sufficiently small neighborhood, $Z_{alg}$ is given by:
\begin{equation*}
Z_{alg}\cap U = \{ x \in U |\, \text{ rank}(B\circ T(x)) < \max_{y\in U} \text{ rank} (B\circ T)(y)\}
\end{equation*}
\end{corollary}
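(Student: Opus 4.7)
The plan is to reduce the problem to an application of the cited theorem of Kobayashi by (i) showing that $Z_{alg}\cap U$ is really just the non-free locus of the quotient $Q$, and (ii) producing a free resolution of $Q$ on $U$ whose last map between free sheaves is exactly $B\circ T$.

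First, for every $x\in X$ the stalk $Gr^{hns}(E)_x = S_x \oplus Q_x$ is a free $\mathcal{O}_{X,x}$-module iff both summands are free, so a priori $Z_{alg}$ is the union of the non-free loci of $S$ and of $Q$. I would observe that the non-free locus of $S$ is already contained in the non-free locus of $Q$: if $Q_x$ is free then, over the local ring $\mathcal{O}_{X,x}$, it is projective, so the exact sequence \eqref{seq} of stalks
\begin{equation*}
0 \to S_x \to E_x \to Q_x \to 0
\end{equation*}
splits, giving $E_x \cong S_x \oplus Q_x$. Since $E$ is a holomorphic vector bundle, $E_x$ is free, so $S_x$ is a direct summand of a free module, hence projective, hence free over the local ring $\mathcal{O}_{X,x}$. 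Therefore
\begin{equation*}
Z_{alg} = \{x \in X \mid Q_x \text{ is not free}\}.
\end{equation*}

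Second, on a sufficiently small open set $U$, the vector bundle $E$ trivializes, $E|_U \cong \mathcal{O}_U^{\oplus r}$ with $r=\mathrm{rank}(E)$. Combining this trivialization with the finite free resolution \eqref{res1} of $S$, the composition $B\circ T$ becomes a map of free sheaves, and we obtain a finite free resolution of $Q$ on $U$:
\begin{equation*}
\xymatrix{0 \ar[r] & \mathcal{O}_U^{\oplus r_\ell} \ar[r] & \cdots \ar[r] & \mathcal{O}_U^{\oplus r_1} \ar[r]^-{B\circ T} & \mathcal{O}_U^{\oplus r} \ar[r] & Q \ar[r] & 0.}
\end{equation*}

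Third, apply the cited Theorem of Kobayashi to $\zeta = Q$, with $h = B\circ T$ playing the role of the last map between free sheaves in the resolution. This gives
\begin{equation*}
\{x \in U \mid Q_x \text{ is not free}\} = \{x \in U \mid \mathrm{rank}(B\circ T)(x) < \max_{y\in U}\mathrm{rank}(B\circ T)(y)\},
\end{equation*}
and combining with the first step yields the claimed description of $Z_{alg}\cap U$.

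There is no real obstacle in this argument: everything is a direct packaging of the theorem just quoted together with a standard local-ring splitting argument. The only point requiring a moment of care is the reduction $Z_{alg}\cap U = \{Q_x \text{ not free}\}$, i.e. that freeness of $Q_x$ forces freeness of $S_x$; this is precisely the projectivity/splitting argument above, and uses nothing beyond the fact that a direct summand of a free module over a local ring is free.
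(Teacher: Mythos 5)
Your proposal is correct and follows essentially the same route as the paper: reduce $Z_{alg}$ to the non-free locus of $Q$ (the paper merely asserts that freeness of $Q_x$ implies freeness of $S_x$, which your projective/splitting argument over the local ring makes precise) and then apply Kobayashi's theorem to the resolution of $Q$ on $U$ with $h = B\circ T$ after trivializing $E$. No gaps; your write-up simply supplies details the paper leaves implicit.
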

In the general case we obtain a similar description inductively.  Recall the filtration~\eqref{filter}.  For each $i\leq p$ we let $B_{i}$ be inclusion map
\begin{equation}\label{general seq}
\xymatrix{0 \ar[r]& S^{i-1} \ar[r]^{B_{i}}& S^{i} \ar[r]& Q^{i} \ar[r]& 0},
\end{equation}
Denote $Z_{i}$ the set where $Q^{i}$ fails to be locally free.  The set $Z_{p}$  was described in the simple case above.  Fix an open set $U \subset X\backslash Z_{p}$.  On $U$, $S^{p-1}$ is a vector bundle, and hence we get a resolution of $Q^{p-1}$
\begin{equation*}
\xymatrixcolsep{4pc}\xymatrix{
0\ar[r]& \mathcal{O}_{U}^{\oplus r_{\ell}}\ar[r]\cdots \ar[r]&\mathcal{O}_{U}^{\oplus r_{1}}\ar[r]^{B_{p-1}\circ T_{p-1}}& S^{p-1} \ar[r]& Q^{p-1} \ar[r] &0}.
\end{equation*}
We thus obtain a description of the set $\ti Z_{p-1} =  Z_{p-1} \cap X\backslash Z_{p} $.  Then $Z_{p-1} =\ti Z_{p-1} \cup Z_{p}$ is precisely the set where $Q^{p}\oplus Q^{p-1}$ fails to be locally free.  This continues inductively.  As an example, we will indicate how to obtain a description of $Z_{alg}$ in the case where the $HNS$ filtration of $E$ has three quotients, $0\subset S^1\subset S^2\subset E$. In this case, we have $Gr^{hns}(E) = S^1 \oplus S^2/S^1 \oplus E/S^2$.    Using the above argument we obtain an explicit local description of the set
\begin{equation*}
Z_{2} = \{x \in X \big| \,E/S^2_{(x)} \rm{ \,\,is \,\,not\,\, free } \}. 
\end{equation*}
Now, consider the exact sequence:
\begin{equation*}
\xymatrix{
0\ar[r]&{S}^{1}\ar[r]& S^2\ar[r]&S^2/{S}^{1}\ar[r] &0.
}
\end{equation*}
The main difference between this sequence and the sequence~\eqref{seq} is that $S^2$ is not a vector bundle. However, it is a vector bundle over $X\backslash Z_{2}$.  Thus, working over this open manifold we can find an explicit local description of $Z_{1} = \{ x \in X \big| S^2/{S}^{1}_{(x)} \rm{ \,\,is\,\, not\,\, free } \}$. Since we clearly have $Z_{alg} = Z_{1} \cup Z_{2}$, we have succeeded in obtaining a local description of $Z_{alg}$ in this case.  

\subsection{The induced geometry of subsheaves and quotients sheaves} 

In this section we define induced metrics and provide explicit formulas for the induced connections we will need later on. We recall the exact sequence~\eqref{seq} and restrict ourselves to the open manifold $X\backslash Z_{alg}$. Because the sheaves $S$ and $Q$ are locally free here, the metric $H_0$ on $E$ induces a metric $J$ on $S$ and a metric $M$ on $Q$. For sections $\psi,\phi$ of $S$, we define the metric $J$ as follows:
\be
\langle\phi,\psi\rangle_J=\langle B(\phi),B(\psi)\rangle_{H_{0}}.\nonumber
\ee
In order to define $M$ on $Q$, we note that $H_0$ gives a splitting of $\eqref{seq}$:
\be
\label{splitting}
0\longleftarrow{S}\xleftarrow{{\phantom {X}}{\pi}{\phantom {X}}} {E}\xleftarrow{{\phantom {X}}{p^\dagger}{\phantom {X}}} {Q}\longleftarrow0.
\ee
Here $\pi$ is the orthogonal projection from $E$ onto $S$ with respect to the metric $H_0$. For sections $v,w$ of $Q$, we define the metric $M$ by:
\be
\langle v,w\rangle_M=\langle p^{\dagger}(v),p^{\dagger}(w)\rangle_{H_{0}}.\nonumber
\ee
\begin{definition}
{\rm On $X\backslash Z_{an}$ the sheaves $S^{i}$ and $Q^{i}$ are holomorphic vector bundles. We define an} the induced metric {\rm $J_{i}$ on $S^{i}$, and $K_{i}$ on $Q^{i}$ to be one constructed as above.} 
\end{definition}
Note that on $X\backslash Z_{alg}$ it is equivalent to induce the metric $J_{i-1}$ on $S^{i-1}$ by restricting the metric $J_{i}$ induced on $S^{i}$ to the image of $S^{i-1} \subset S^{i}$. 

Once we have sequence $\eqref{splitting}$, the second fundamental form $\gamma\in\Gamma(X,\Lambda^{0,1}\otimes Hom(Q,S))$ is given by:
\be
\gamma= \bar\pl p^\dagger. \nonumber
\ee 
Of course, by composing with the projection $p$, we can write the second fundamental form as a homomorphism from $S^\perp$ to $S$: $\gamma\circ p= \bar\pl p^\dagger\circ p$. As $p$ is holomorphic, and $p^\dagger\circ p=\mathds{1}-\pi$, we see $\gamma\circ p=\bar\pl(\mathds{1}-\pi)=-\bar\pl\pi.$ By the definition of the induced metric $M$,  working with $\gamma$ and $\gamma\circ p$ are equivalent once we take corresponding norms, so we suppress the map $p$ from our notation.

Suppose now that $h(t)$ is the solution of the Donaldson heat flow on the vector bundle $E$, and let $H(t) = H_{0}h(t)$ denote the  metric.  Then $H(t)$ induces metrics $J(t)$ on any coherent, torsion-free subsheaf $S\subset E$ in the manner described above.  If $S$ is locally free on $X\backslash Z_{alg}$, then we can define a smooth section of ${\rm Hom}(S,S)$ on $X\backslash Z_{alg}$ by
\begin{equation*}
h^{S}(t) := J(0)^{-1}J(t). 
\end{equation*}
We will denote the $h^{Q}(t)$ the analogously defined homomorphism induced by $H(t)$ on the quotient sheaf $Q = E/ S$.

 \section{A barrier function}\label{barrier section}

In this section we construct a natural barrier function which is non-negative, and vanishes precisely on $Z_{alg}$.  As in the previous section, we first assume the HNS filtration of $E$ is given by $0 \subset S\subset E$.  The induced metric $J$ on $S$ is a section of the sheaf $S^{*} \otimes \overline{S}^{*}$; that is
\begin{equation*}
J \in \Gamma(X, S^{*} \otimes \overline{S}^{*}),
\end{equation*}
and this section defines a metric on the complement of $Z_{alg}$.  Ideally, we would like to take the function $\sigma$ to be the norm of the determinant of $J$ regarded as a matrix.  However, $S$ need not be a vector bundle, and so the determinant of $J$ as a matrix is not necessarily a globally defined object.  We get around this as follows. Working over $X\backslash Z_{alg}$, the determinant of the matrix $J$ (as given in local coordinates), is a section of the determinant line  bundle $det(S)^{*}\otimes \overline {\det (S)}^*$. Although $\det J \in \Gamma(X\backslash Z_{alg}, det(S)^{*}\otimes \overline {\det (S)}^*))$ is only defined on $X\backslash Z_{alg}$, we show it extends by zero to a smooth, global section. We accomplish this by finding a local expression which makes the extension clear. Recall the exact sequence~(\ref{seq}).  The induced metric $J$ is obtained from the inclusion $B: S \hookrightarrow E$ and the metric $H_{0}$ on $E$.  To begin with, consider the case in which $S$ is locally free.  Then, given a local trivialization of $S$ over $U$, the metric $J$ is given by
\begin{equation*}
J_{\bar{\eta}\beta} = (H_{0})_{\bar{\beta}\alpha}B^{\alpha}{}_{\gamma}\overline{B^{\beta}{}_{\eta}}.
\end{equation*}
From this expression it is clear that $det(J)$ extends smoothly by zero over the set
\begin{equation*}
B_{sing} = \{x \in X | rank(B(x)) < \max rank(B)\}.
\end{equation*}
Since $S$ is locally free, we clearly have $B_{sing} = Z_{alg}$.  

In the case that $S$ is not locally free, we argue as follows.  Observe that we have an inclusion $\bigwedge^r S \rightarrow \bigwedge^r E$, which factors as
\begin{equation*}
\bigwedge^r S \rightarrow \det(S) := (\bigwedge^r S)^{**} \rightarrow (\bigwedge^r E)^{**} = \bigwedge^r E.
\end{equation*}
The key point is that $\det(S)$ is a locally free sheaf of rank one on all of $X$, in particular, a line bundle.  Now, the metric $H$ on $E$ induces a metric $J_r$ on $\det(S)$, which is a smooth, global section of $\det(S)^* \otimes \overline{\det(S)}^*$, clearly extends $\det( J)$, and vanishes precisely where the map $\psi$ given by
\begin{equation*}
\xymatrix{
(\bigwedge^r S)^{**} \ar[r]^{\psi} &\bigwedge^r E}
\end{equation*}
fails to have rank $1$.  We claim that $Z = \{\psi =0\} = Z_{alg}$.  We clearly have $Z \subset Z_{alg}$.  To see the reverse containment observe that, if $\psi(x) \ne0$, then over an open set $U \ni x$ where $\psi \ne 0$, we have the image of $(\bigwedge^r S)^{**}$ in $ \bigwedge^r E$ defines a point in the Grassmanian $Gr(n,r)$, and hence we can pull back the universal $r$-plane bundle over  $Gr(n,r)$ to find a locally free sheaf $\tilde{S}$ on $X\backslash Z$ of rank $r$ which agrees with $S$ over $X\backslash Z_{alg}$.  However, since $S$ is reflexive and $\tilde{S}$ is locally free on $X\backslash Z$, we have that $\text{Hom}(S, \tilde{S})$ and $\text{Hom}(\tilde{S},S)$ are reflexive, and hence normal (see, e.g. \cite{Kob}, Proposition 5.23 ).  Since $Z_{alg}\backslash Z$ has codimension at least 2,  it follows immediately that $\tilde{S} =S$ over $X\backslash Z$. Since $S$ is locally free over $X\backslash Z$, it follows that $\psi|_{X\backslash Z} = \wedge^r B|_{X \backslash Z}$, and hence $B$ has full rank on $X\backslash Z$.  But, again using that $S$ is locally free on $X\backslash Z$, we must have $\wedge^{r}B=0$ on $Z_{alg}\backslash Z$.  In particular, we must have $Z=Z_{alg}$.  We have proved the following:
\begin{proposition}\label{extension prop}
Let $\zeta$ be the smooth section of $\det(S)^{*} \otimes \overline{\det(S)}^{*}$ over $X\backslash Z_{alg}$ defined by $\zeta = \det(J)$.  Then $\zeta$ extends to a smooth, global section $\sigma$ of $\det(S)^{*} \otimes \overline{\det(S)}^{*}$ .  Moreover, the extension is given explicitly by
\begin{equation}\label{sigma}
\sigma(x) = \left\{ \begin{array}{ll}
	\zeta(x)& {\rm if }\quad x\in X\backslash Z_{alg}\\
	0 &{\rm if }\quad x\in Z_{alg}.
\end{array}
\right.
\end{equation}
In particular, we have the equality of sets $\{\sigma =0\} = Z_{alg}$.
\end{proposition}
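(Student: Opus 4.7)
The plan is to construct $\sigma$ directly as the pointwise squared norm of a canonical morphism defined on all of $X$, rather than attempting an a posteriori analytic extension of $\zeta$ across $Z_{alg}$. The key observation, already highlighted in the discussion preceding the proposition, is that although $S$ itself may fail to be locally free, the double dual $\det(S) := (\bigwedge^r S)^{**}$ is a genuine line bundle on all of $X$, and so any morphism out of it has a well-behaved zero locus.

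First I would build the canonical map $\psi \colon \det(S) \to \bigwedge^r E$ by applying $\bigwedge^r$ to the inclusion $B \colon S \hookrightarrow E$ and factoring through the double dual; this is possible because $\bigwedge^r E$ is locally free, hence reflexive. The metric $H_0$ on $E$ induces a Hermitian metric $H_0^{(r)}$ on $\bigwedge^r E$, which allows one to define $\sigma := \langle \psi, \psi \rangle_{H_0^{(r)}} \in \Gamma(X, \det(S)^* \otimes \overline{\det(S)}^*)$. This section is smooth and globally defined by construction. Over the open set $X\backslash Z_{alg}$ where $S$ is locally free, the canonical isomorphism $\det(S) \cong \bigwedge^r S$ identifies $\psi$ with $\bigwedge^r B$, and an elementary computation in a local trivialization shows that the resulting section agrees with $\det(J) = \zeta$. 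This verifies the extension formula~(\ref{sigma}).

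It remains to prove $\{\sigma = 0\} = Z_{alg}$. Writing $Z := \{\psi = 0\}$, the positive definiteness of $H_0^{(r)}$ yields $\{\sigma = 0\} = Z$, and the inclusion $Z \subseteq Z_{alg}$ is immediate because $\bigwedge^r B$ has rank one wherever $S$ is locally free. For the reverse inclusion I would argue as follows: if $x \in X \backslash Z$, then on a neighborhood $U$ of $x$ the nowhere-vanishing $\psi$ assigns an $r$-plane in $E$ to each point, giving a classifying map into the Grassmann bundle; pulling back the tautological sub-bundle furnishes a locally free sheaf $\tilde{S}$ on $X \backslash Z$ of rank $r$ which agrees with $S$ on the open dense subset $X \backslash Z_{alg}$. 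Using the reflexivity of $S$ and the local freeness of $\tilde{S}$, both $\text{Hom}(S, \tilde{S})$ and $\text{Hom}(\tilde{S}, S)$ are reflexive and hence normal; since $Z_{alg} \backslash Z$ has codimension at least two in $X\backslash Z$, the mutual identifications on $X \backslash Z_{alg}$ extend uniquely across this subset to give mutually inverse isomorphisms on all of $X \backslash Z$. Thus $S$ is locally free on $X \backslash Z$, which forces $x \notin Z_{alg}$ and completes the argument.

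The main obstacle is the sheaf-theoretic extension step in the last paragraph, which requires correctly combining reflexivity of $S$, normality of $\text{Hom}$ sheaves into reflexive targets, and the codimension bound $\text{codim}(Z_{alg} \backslash Z) \geq 2$. Each of these ingredients is standard (see for example Proposition 5.23 of \cite{Kob}), but they must be invoked in the correct order, and one must be careful to work on the right open subsets so that the sheaves in question actually satisfy the hypotheses. The rest of the proposition amounts to bookkeeping once the right global object, namely $\det(S) = (\bigwedge^r S)^{**}$, has been identified as the natural receptacle for $\sigma$.
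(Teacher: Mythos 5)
Your construction is exactly the paper's: both define $\sigma$ via the canonical map $\psi\colon \det(S)=(\bigwedge^{r}S)^{**}\to\bigwedge^{r}E$ together with the metric induced by $H_{0}$, identify it with $\det(J)$ where $S$ is locally free, and prove $\{\psi=0\}=Z_{alg}$ by pulling back the tautological bundle from the Grassmannian and then using reflexivity/normality of ${\rm Hom}(S,\tilde{S})$, ${\rm Hom}(\tilde{S},S)$ plus the codimension-two bound on $Z_{alg}\backslash Z$. The only slip is your one-line justification of $Z\subseteq Z_{alg}$: local freeness of $S$ alone does not force $\bigwedge^{r}B$ to be nonzero on fibres (e.g.\ $S=\mathcal{O}\xrightarrow{(z,w)}\mathcal{O}^{2}=E$ has $S$ locally free but $\bigwedge^{1}B(0)=0$); the correct reason is that at $x\notin Z_{alg}$ the quotient stalk $Q_{x}$ is also free, so the stalk sequence splits, $B$ is injective on the fibre at $x$, and hence $\psi(x)\neq0$.
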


Of course, this same analysis carries over immediately to the case of the general filtration~\eqref{filter}.  In this case, working on $X\backslash Z_{alg}$ we have
\begin{equation*}
(J_{i-1})_{\bar{q}p} = (J_{i})_{\bar{\beta}\alpha}(B_{i})^{\alpha}{}_{\gamma}(T_{i})^{\gamma}{}_{p}\overline{(B_{i})^{\beta}{}_{\eta}(T_{i})^{\eta}{}_{q}}.
\end{equation*}
where $B_{i}$ is the map in the sequence~\eqref{general seq}, and $T_{i}$ is the first map in the resolution of $S^{i-1}$.  We leave the details to the reader.  We now define our barrier function on $X$. 
\begin{definition}
Fix metrics $\phi_{i}$ on the line bundles $\det(S_{i})^{*}\otimes \overline{\det(S_{i})}^*$.  We then define
\begin{equation}
\sigma = c\prod_{i=1}^{p-1}|\det(J_{i})|_{\phi_{i}},
\end{equation}
where $\det(J_{i})$ denotes the smooth section $\sigma_{i}$ of Proposition~\ref{extension prop}, and $c>0$ is chosen so that $\max_{X} \sigma =1$.
\end{definition}
The reader can easily verify that again in this case we have the equality of sets \\$\{\sigma=0\}=Z_{alg}$.

Along the Donaldson heat flow, $H(t)$ induces metrics on the sheaves in the Harder-Narasimhan-Seshadri filtration, and hence we get barrier functions $\sigma(t)$ in the above way.  In order to avoid confusion, we define
\begin{definition}\label{def: barrier}
Along the Donaldson heat flow, we denote by $\sigma$ the barrier function induced by the metric $H(0)$.
\end{definition}

\section{$L^p$ estimates for induced curvature}
\label{inducedcurvesection}

In the previous section we constructed a barrier function $\sigma$, which vanishes precisely on $Z_{alg}$. In this section we use the assumptions $(A), (B)$ and $(C)$ to get $L^p$ bounds for $|F^S|$ and $|F^Q|$ on compact subsets away from $Z_{alg}$.  We will also elucidate the relationship between the assumptions when $S$ and $Q$ are locally free.

To begin, recall how $\Lambda F$ decomposes on sub bundles and quotient bundles:
\be
\Lambda F|_S=\Lambda F^S+\Lambda\gamma\wedge\gamma^\dagger\nonumber
\ee
and
\be
\Lambda F|_Q=\Lambda F^Q-\Lambda\gamma^\dagger\wedge\gamma.\nonumber
\ee
Along the Donaldson heat flow the quantity $|\Lambda F(t)|_{H_{t}}$ is bounded.  Moreover, $\Lambda\gamma\wedge\gamma^\dagger$ is a positive operator. Thus, we have that
\begin{equation}\label{missing est}
\Lambda F^{S}(t) \leq \Lambda F(t)|_{S} \qquad \text{ and } \quad \Lambda F^{Q}(t) \geq \Lambda F(t)|_{Q}
\end{equation}
The estimates imply estimates for the normalized endomorphisms  $\ti h^S$ and $\ti h^Q$ defined by
\begin{equation*}
\ti h^S = \frac{h^{S}}{\|h^{S}\|_{L^{2}(X)}} \qquad \ti h^S = \frac{h^{Q}}{\|h^{Q}\|_{L^{2}(X)}}
\end{equation*}
We do note specify the metric in the $L^{2}$ norm, since $\|h^{S}\|_{L^{2}(X, H_{t})} = \|h^{S}\|_{L^{2}(X, H_{0})}$.  Notice that $\ti h^S$ and $\ti h^Q$ define the same curvature terms as $h^S$ and $h^Q$, and hence it suffices to prove bounds for the normalized endomorphisms. We state the main result of this section here. 
\begin{proposition}\label{prop: curv est}
Suppose that $(A), (B)$ and $(C)$ hold along the Donaldson heat flow.  Then, for any $K \Subset X\backslash Z_{alg}$, there holds
\begin{equation*}
|F^{S}|_{L^{p}(K,H_{t})}+|F^{Q}|_{L^{p}(K,H_{t})}+|\gamma|_{L^{p}(K,H_{t})} \leq C(K).
\end{equation*}
\end{proposition}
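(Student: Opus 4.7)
The plan rests on the observation that on any compact $K \Subset X\backslash Z_{alg}$ the barrier function $\sigma$ is bounded below by a positive constant $c_{K}$, so every factor $\sigma^{-k}$ appearing in assumptions $(A)$, $(B)$, $(C)$ is uniformly bounded on $K$. Assumption $(C)$ immediately yields the pointwise bound $|\gamma|^{2} \leq C(K)$ on $K$ uniformly in $t$, giving the $L^{p}(K)$ estimate for $\gamma$. Combining this with the decompositions
\[\Lambda F|_{S}=\Lambda F^{S}+\Lambda\gamma\wedge\gamma^{\dagger},\qquad \Lambda F|_{Q}=\Lambda F^{Q}-\Lambda\gamma^{\dagger}\wedge\gamma,\]
together with the uniform bound on $|\Lambda F|_{H(t)}$ from the lemma at the end of Section~\ref{background}, one obtains a uniform $L^{\infty}(K)$ bound on the mean curvatures $\Lambda F^{S}$ and $\Lambda F^{Q}$.

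The substantive step is promoting these trace bounds to $L^{p}$ control of the full curvatures. I would work locally on open sets $U\Subset X\backslash Z_{alg}$ on which $S$ and $Q$ are holomorphic vector bundles, with induced metric on $S$ of the form $J(t)=J(0)h^{S}(t)$. The standard change-of-metric formula expresses $F^{S}(t)$ in terms of $h^{S}$ and its first two derivatives, so the problem reduces to controlling $h^{S}$, $(h^{S})^{-1}$, and $\nabla^{0}h^{S}$ pointwise on $K$ and then the second-derivative piece $\bar\partial((h^{S})^{-1}\nabla^{0}h^{S})$ in $L^{p}(K)$. The first two are supplied by $(B)$, provided the scalars $\|{\rm Tr}(h^{S^{i}})\|_{L^{2}(X)}$ are bounded uniformly in $t$; this is the one auxiliary input needed, and follows from a standard maximum-principle analysis of $\log {\rm Tr}(h^{S^{i}})$ along the Donaldson heat flow, whose heat inequality is driven by the already-controlled $|\Lambda F|$. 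Hypothesis $(A)$ then provides the $L^{\infty}(K)$ bound on $\nabla^{0}h^{S}$, and hence control on the first-order part of the curvature formula.

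For the remaining second-derivative contribution I would run a Moser iteration on $|F^{S}|^{2}$, and analogously on $|F^{Q}|^{2}$: this quantity satisfies an elliptic differential inequality whose inhomogeneous terms involve $|F^{S}|$, $|\gamma|$, $|\nabla\gamma|$, and $|\nabla^{0}h^{S}|$, all of which are bounded in terms of powers of $\sigma^{-k}$ by $(A)$--$(C)$ and the previous steps. Choosing cutoff functions built from suitable powers of the barrier $\sigma$ to localize on $K$, the iteration should produce the claimed $L^{p}(K)$ estimate. The main technical obstacle is the bookkeeping in this iteration: the interplay between the $\sigma$-weights in the cutoffs and the $\sigma^{-k}$ factors appearing in the error terms must be arranged so that the scheme actually closes at each step. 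This is precisely where the \emph{polynomial} rather than arbitrary form of $(A)$--$(C)$ is essential, and where the algebraic construction of $\sigma$ in Section~\ref{barrier section} pays off: it makes the degeneracy of the various quantities at $Z_{alg}$ quantitatively tractable.
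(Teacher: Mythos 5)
Your first paragraph is fine: on $K\Subset X\backslash Z_{alg}$ one has $\sigma\geq c_K>0$, so $(C)$ gives $|\gamma|\leq C(K)$ pointwise and hence in $L^p$, and combining the bound on $|\Lambda F|_{H_t}$ with the decompositions of $\Lambda F$ onto $S$ and $Q$ bounds $\Lambda F^S,\Lambda F^Q$ in $L^\infty(K)$. But the next step contains a genuine error: the ``auxiliary input'' that $\|\Tr(h^{S^i})\|_{L^2(X)}$ is uniformly bounded in $t$ is false in the situation of the theorem. Since $E$ is unstable and $S^1$ destabilizes it, the drift term in the flow, $-(\Lambda F-\mu(E)\mathds{1})$ restricted to $S^1$, has average trace $\mathrm{rk}(S^1)(\mu(S^1)-\mu(E))>0$, so $h^{S^1}$ grows (typically exponentially) and $h^{S^2}$ decays; a maximum principle applied to $\log\Tr(h^{S^i})$ only yields growth linear in $t$ for the logarithm, not uniform boundedness. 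This is precisely why the paper works throughout with the normalized endomorphisms $\ti h^{S}=h^{S}/\|h^{S}\|_{L^2(X)}$, why condition $(B)$ and the $C^0$ estimate of Proposition~\ref{prop: c0} are phrased relative to $\|\Tr(h^{S^i})\|_{L^2(X)}$, and why one uses that the induced curvature is unchanged under the time-dependent constant rescaling. Your scheme can only be salvaged by making this normalization explicit; as written, the pointwise control of $h^S$ and $(h^S)^{-1}$ that you feed into the change-of-metric formula is not available.

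The second substantive step also does not close. The differential inequality satisfied by $|F^S|^2$ (cf.\ \eqref{FS}) has inhomogeneous terms involving $|\nabla\gamma|$ and even $|\nabla\nabla\gamma|$, and you assert these are ``bounded in terms of powers of $\sigma^{-k}$ by $(A)$--$(C)$ and the previous steps.'' They are not: $(C)$ controls $|\gamma|$ only, and bounding $\nabla\gamma$ is exactly the content of Sections~\ref{evolve section}--\ref{parabolic}, which in turn \emph{uses} the $L^p$ bounds of Proposition~\ref{prop: curv est} as input — so your argument is circular at this point (and the cubic term $|F^S|^3$ would additionally require smallness or an $\epsilon$-regularity mechanism for a Moser iteration to close). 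The paper's route is much softer and avoids any iteration on curvature: from the identity \eqref{eq: key}, the $C^0$ bound of Proposition~\ref{prop: c0}, the lower bound from $(B)$, the gradient bound from $(A)$ and the $L^\infty$ bound on $\Lambda F^S(t)$, one sees $\Delta_0\ti h^S$ is bounded on $K$, so linear elliptic $L^p$ theory gives $\ti h^S\in L^p_2(K)$ (Lemma~\ref{lem: lp2}); the $L^p$ bound on $F^S$ (and $F^Q$) then follows from the change-of-metric formula, with norm equivalence between $H_0$ and $H_t$ supplied by the two-sided bounds on $\ti h$. Incidentally, your closing claim that the polynomial form of $(A)$--$(C)$ is essential contradicts the paper's remark that any dependence on $\sigma$ would do; the weights enter only through $\sigma\geq c_K$ on compact sets, not through delicate bookkeeping in an iteration.
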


We begin by proving
\begin{proposition}\label{prop: c0}
There exists uniform constants $C, \gamma >0$ such that
\begin{equation*}
{\rm Tr}(h^{S}) \leq C\sigma^{-\gamma}||{\rm Tr}( h^{S})||_{L^{2}(X)},
\end{equation*}
uniformly on $X\times[0,\infty)$, where $\sigma$ is the cut-off function of Section~\ref{barrier section}.
\end{proposition}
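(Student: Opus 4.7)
My plan is to establish a differential inequality for $\log {\rm Tr}(h^{S})$ at each fixed time $t$, and then apply a weighted Moser iteration using the barrier function $\sigma$ to promote an $L^{2}$ bound into a weighted $L^{\infty}$ bound, with all constants uniform in $t$ coming from the uniformity of the assumptions.

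The first step is the standard Donaldson--Simpson type computation for the logarithm of the trace of a ratio of Hermitian metrics on a holomorphic bundle. Applied on the open set $X\setminus Z_{alg}$ where $S$ is locally free, it gives an inequality of the form
\[
\Delta \log {\rm Tr}(h^{S}) \geq -|\Lambda F^{S}|_{J_{0}} - |\Lambda F^{S}|_{J(t)}.
\]
Writing $F^{S} = F|_{S} - \gamma \wedge \gamma^{\dagger}$ and using the earlier lemma bounding $|\Lambda F|_{H(t)}$ along with assumption (C), we obtain, uniformly in $t$,
\[
\Delta \log {\rm Tr}(h^{S}) \geq -C\sigma^{-k} \qquad \text{on } X\setminus Z_{alg},
\]
and then the chain rule yields $\Delta {\rm Tr}(h^{S}) \geq -C\sigma^{-k}\,{\rm Tr}(h^{S})$.

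The second step is a weighted Moser iteration. For $N$ sufficiently large, I would test against ${\rm Tr}(h^{S})^{p-1}\sigma^{2N}$ and integrate by parts. The factor $\sigma^{2N}$ vanishes along $Z_{alg}$, so no boundary contribution appears despite working on $X\setminus Z_{alg}$. The cross term $2\int {\rm Tr}(h^{S})^{p-1}\sigma^{2N-1}\nabla\sigma\cdot \nabla {\rm Tr}(h^{S})$ is handled by Cauchy--Schwarz; here the smoothness of $\sigma$ (so $|\nabla \sigma|$ is bounded) together with assumption (A) --- which gives $|\nabla {\rm Tr}(h^{S})|\leq C\sigma^{-k}{\rm Tr}(h^{S})$ --- allows the cross term to be absorbed into the good gradient term and the bad singular term. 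Together with the Sobolev inequality applied to $\sigma^{N}{\rm Tr}(h^{S})^{p/2}$, this produces an iterable inequality of the form
\[
\|\sigma^{\beta_{i+1}}{\rm Tr}(h^{S})\|_{L^{p_{i+1}}} \leq C^{1/p_{i}}\|\sigma^{\beta_{i}}{\rm Tr}(h^{S})\|_{L^{p_{i}}},
\]
with $p_{i+1} = \chi p_{i}$, $\chi = n/(n-2)$, and $\beta_{i}$ tracked through the iteration. Summing the constants as in the standard Moser argument, and starting from the elementary bound $\|\sigma^{\beta_{0}}{\rm Tr}(h^{S})\|_{L^{2}} \leq \|{\rm Tr}(h^{S})\|_{L^{2}}$ (using $\sigma \leq 1$), one obtains
\[
\sup_{X}\bigl(\sigma^{\gamma}{\rm Tr}(h^{S})\bigr) \leq C\,\|{\rm Tr}(h^{S})\|_{L^{2}(X)}
\]
for some $\gamma > 0$, with $C$ independent of $t$. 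Rearranging gives the desired estimate.

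The main technical obstacle will be the careful bookkeeping in the Moser iteration: the singular weight $\sigma^{-k}$ in the differential inequality interacts with the cutoff weight $\sigma^{2N}$, and the power of $\sigma$ appearing in the iterated $L^{p}$ norm drifts at each step. One must choose $N$ large enough (depending on $k$ and the final exponent $\gamma$) so that all integrals remain finite and the iteration closes with summable constants; the limiting power $\gamma$ is then determined by $N$ and the Sobolev exponent $\chi$. Assumption (A) is used precisely at the point where the cross term from $\nabla \sigma$ must be controlled; without it, no Caccioppoli-type bound would be available for the weighted test function.
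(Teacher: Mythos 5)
Your overall strategy---deriving a differential inequality of the form $\Delta\,{\rm Tr}(h^{S})\ge -C\sigma^{-k}\,{\rm Tr}(h^{S})$ and running a Moser iteration in which the barrier $\sigma$ neutralizes the singular potential---is the same as the paper's, but two points in your execution diverge in ways that matter. First, the proposition is stated without hypotheses (A)--(C), and it is invoked later in the proof of Proposition~\ref{prop: lower bd}, which derives (C) from (A); your argument uses (C) (to bound $|\Lambda F^{S}(t)|$ in the log-trace inequality) and (A) (to absorb the cross term), so as written it proves only a conditional statement and would make that later proposition circular. The paper's proof is assumption-free: it takes the trace of the identity \eqref{eq: key}, bounds $|\Lambda F^{S}_{0}|\le C\sigma^{-k}$ from the explicit formula for the induced metric $J(0)$, and uses the unconditional operator inequality $\Lambda F^{S}(t)\le \Lambda F(t)|_{S}$ of \eqref{missing est} (positivity of $\Lambda\gamma\wedge\gamma^{\dagger}$) together with the uniform bound on $|\Lambda F|_{H(t)}$; only this one-sided bound is needed because $h^{S}>0$, so ${\rm Tr}(h^{S}\Lambda F^{S}(t))\le C\,{\rm Tr}(h^{S})$. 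Likewise the cross term in the iteration needs only $|\nabla\sigma|\le C$ and Cauchy--Schwarz, not (A).

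Second, the genuine soft spot in your global-weight scheme is the justification that the weighted integrals are finite and that the integration by parts over $X\setminus Z_{alg}$ produces no contribution from $Z_{alg}$. ``Choose $N$ large'' does not settle this: the starting weight and its drift are dictated by $k$ and the Sobolev exponent (the power of $\sigma$ in the iterated norms increases by a fixed amount divided by $p_{i}$ at each step, so the final $\gamma$ is determined, not chosen), and there is no a priori bound, uniform in $t$, saying that ${\rm Tr}(h^{S})$ or its gradient grows at most polynomially in $\sigma^{-1}$ near $Z_{alg}$---that is essentially what the proposition is meant to establish. The paper sidesteps this by running the iteration with cutoffs $\sqrt{\eta_{\epsilon}}=\max\{\sigma^{k}-\epsilon^{k},0\}$ supported in the super-level sets $K(\epsilon)=\{\sigma\ge\epsilon\}$, which are compactly contained in $X\setminus Z_{alg}$, obtaining $\|u\|_{L^{\infty}(K(2\epsilon))}\le C\epsilon^{-\gamma}\|u\|_{L^{2}(X)}$ for every $\epsilon>0$, and then converting this family of estimates into the pointwise weighted bound by a short contradiction argument evaluated at $2\epsilon=\sigma(x)$. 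Your version can be repaired either by truncating the weights in the same fashion, or by noting the qualitative fixed-time bound $h^{S}(\cdot,t)\le C(t)\,{\rm Id}$ (since $H(t)$ and $H_{0}$ are equivalent metrics on $E$ for each fixed $t$), which legitimizes each step's integration by parts, with uniformity in $t$ recovered only through the iteration constants.
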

\begin{proof}
We use the Moser iteration.  Recall the standard equation
\be
\label{eq: key}
\Delta_{0} h^S_t=g^{j\bar k}\nabla_{\bar k}h^S_t(h^S_t)^{-1}\nabla^0_jh^S_t+h^S_t(\Lambda F^S_0-\Lambda F^S_t),
\ee
where $\Delta_{0} = g^{j\bar{k}}\nabla_{\bar{k}}\nabla^{0}_{j}$.  Note that
\begin{equation*}
|\Lambda F^{S}_{0}| \leq C\sigma^{-k}
\end{equation*}
for $k \gg 0$, which follows easily from the formula for $\Lambda F^{S}_{0}$ in terms of the induced metric $J(0)$. Taking the trace of  equation~\eqref{eq: key}, and using the upper bound for $\Lambda F^{S}(t)$, we have
\begin{equation*}
\Delta \Tr(h^{S}) \geq - (C + |\Lambda F^{S}_{0}|_{H_{0}})\Tr(h^{S}) \geq -C\sigma^{-k}.
\end{equation*}
Let $u = \Tr(h^{S})$. For any $\epsilon >0$, set $\sqrt{\eta_{\epsilon}} = \max\{\sigma^{k} -\epsilon^{k},0\}$ where $k$ is the constant appearing above.  Denote $K(\epsilon) = \{ \sigma \geq \epsilon\}$ .  Then we have
\begin{equation*}
\int_{K(\epsilon)} \Delta u (u^{\alpha} \eta_{\epsilon}^{2}) dVol \geq -C\int_{K(\epsilon)} \sigma^{-k}u^{\alpha+1}\eta_{\epsilon}^{2} dVol
\end{equation*}
Integration by parts proves
\begin{equation*}
\frac{4\alpha}{(\alpha+1)^{2}}\int_{K(\epsilon)} \eta_{\epsilon}^{2} |\nabla u^{(\alpha+1)/2}|^{2}dV  \leq \int_{K(\epsilon)}\left\{C\sigma^{-k}u^{\alpha+1}\eta^{2}_{\epsilon} + \frac{2C}{(\alpha+1)}u^{(\alpha+1)/2}\eta_{\epsilon}|\nabla u^{(\alpha+1)/2}|\right\}dV 
\end{equation*}
By our choice of $k$, for any $\epsilon \geq 0$ we have
\begin{equation*}
\sigma^{-k}\eta^{2}_{\epsilon} \leq C \quad \text{ on } K(\epsilon)
\end{equation*}
for a uniform constant $C$, independent of $\epsilon$.  Thus,
\begin{equation*}
\int_{K(\epsilon)} \eta_{\epsilon}^{2} |\nabla u^{(\alpha+1)/2}|^{2} \leq \frac{C(\alpha+1)^{2}}{\alpha} \int_{K(\epsilon)}u^{\alpha+1}.
\end{equation*}
The Sobolev imbedding theorem, this implies that for any $\delta>0$ there holds
\begin{equation*}
\|u\|^{p}_{L^{p\beta}(K(\epsilon +\delta))} \leq \delta^{-k}C(p)\| u\|_{L^{p}(K(\epsilon))}
\end{equation*}
for $\beta = n/(n-1) >1$.  Fix $\epsilon >0$, and define $\delta_{j} = 2^{-j}\epsilon$.  Then a standard iteration argument proves
\begin{equation*}
\log(\|u \|_{L^{\infty}(K(2\epsilon))}) \leq C_{1}(p)  -\gamma(p) \log(\epsilon) + \log\|u \|_{L^{p}(K(\epsilon))}.
\end{equation*}
In particular, taking $p=2$ we have
\begin{equation*}
|u|_{C^{0}(K(2\epsilon))} \leq \epsilon^{-\gamma}C \|u\|_{L^{2}(X)},
\end{equation*}
for some fixed constants $\gamma, C$.  We claim that this implies that $|u| \leq \sigma^{-\gamma}\|u\|_{L^{2}(X)} C'$.  Suppose that this inequality does not hold.  Then there exists a sequence of times $t_{j}$ and points $x_{j}$ so that
\begin{equation*}
|u|(x_{j}, t_{j}) \geq j\sigma(x_{j})^{-\gamma}
\end{equation*}
Let $2\epsilon_{j} = \sigma(x_{j})$.  Then we have
\begin{equation*}
j\epsilon_{j}^{-\gamma} = j\sigma(x_{j})^{-\gamma} \leq |u|(x_{j}, t_{j}) \leq |u|_{C^{0}(K(2\epsilon_{j}))} \leq C \epsilon_{j}^{-\gamma}
\end{equation*}
and this is a contradiction for $j$ sufficiently large.
\end{proof}

At this point we have proven that $\ti h^S$ is bounded in $C^0$ and by assumption $(A)$, we have a gradient estimate for the rescaled endomorphisms $\ti h^{S}$ on any compact subset away from $Z_{alg}$.  We can now easily deduce the $L^{p}_{2}$ estimates for $\ti h^{S}$.
\begin{lemma}\label{lem: lp2}
Let $K \Subset X\backslash Z_{alg}$ be a compact set.  Then for each $p$ there is a constant $C(p,K)$, independent of time, such that
\begin{equation*}
\| \ti h\|_{L^{p}_{2}(K, H_{0})} \leq C(p,K).
\end{equation*}
\end{lemma}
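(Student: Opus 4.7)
The plan is to bootstrap the $C^0$ bound from Proposition~\ref{prop: c0} to an $L^p_2$ estimate via a standard interior $L^p$ elliptic regularity argument applied to a rescaled version of equation~(\ref{eq: key}).

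First, on any $K \Subset X\setminus Z_{alg}$ the barrier $\sigma$ is bounded below by some $\sigma_0(K) > 0$. Combining Proposition~\ref{prop: c0} with assumption $(B)$ then yields two-sided bounds $c(K) \leq \tilde h^S \leq C(K)$ on $K$, uniformly in $t$. In particular $(\tilde h^S)^{-1}$ is uniformly bounded on $K$. Assumption $(A)$ gives $|\nabla^0 \tilde h^S| \leq C\sigma^{-k}\Tr(\tilde h^S)$, which is thus also uniformly bounded on $K$.

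Next, dividing equation~(\ref{eq: key}) through by $\|\Tr h^S\|_{L^2(X)}$ produces the same equation with $h^S$ replaced by $\tilde h^S$:
\begin{equation*}
\Delta_0 \tilde h^S = g^{j\bar k}\nabla_{\bar k}\tilde h^S\,(\tilde h^S)^{-1}\nabla^0_j \tilde h^S + \tilde h^S(\Lambda F^S_0 - \Lambda F^S_t).
\end{equation*}
The first term on the right is controlled in $L^\infty(K)$ by the preceding step. The initial curvature $\Lambda F^S_0$ is smooth on $X\setminus Z_{alg}$ and hence bounded on $K$. For the evolving curvature, the decomposition $\Lambda F^S_t = \Lambda F_t|_S - \Lambda \gamma\wedge\gamma^\dagger$, the global bound on $|\Lambda F_t|_{H_t}$ from the lemma at the end of Section~\ref{background}, and assumption $(C)$ combine to give $|\Lambda F^S_t| \leq C + C|\gamma|^2 \leq C\sigma^{-k}$, which is bounded on $K$. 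Altogether $|\Delta_0 \tilde h^S| \leq C(K)$ uniformly in $t$.

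Finally, choose $K \Subset K' \Subset X\setminus Z_{alg}$ and apply the standard interior $L^p$ elliptic estimate for $\Delta_0$ acting on sections of $\mathrm{End}(S)$ over $K'$:
\begin{equation*}
\|\tilde h^S\|_{L^p_2(K)} \leq C\bigl(\|\Delta_0 \tilde h^S\|_{L^p(K')} + \|\tilde h^S\|_{L^p(K')}\bigr).
\end{equation*}
Both norms on the right are uniformly bounded in $t$ by the preceding steps, giving the claim. I do not anticipate a significant obstacle; the only essential subtlety is the use of $(B)$ to secure the pointwise lower bound on $\tilde h^S$, which is needed to invert $\tilde h^S$ in the displayed equation above. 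An entirely analogous argument, with the roles of the bounds in $(B)$ exchanged via duality on $E^*$, treats $\tilde h^Q$.
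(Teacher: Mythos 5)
Your proposal is correct and follows essentially the same route as the paper: use the $C^0$ bound from Proposition~\ref{prop: c0} together with assumptions $(A)$, $(B)$, $(C)$ (and the uniform bound on $|\Lambda F|_{H_t}$) to bound $\Delta_0 \ti h^S$ on $K$, then invoke interior elliptic $L^p$ theory. Your write-up merely makes explicit the details the paper compresses into one line, including the harmless rescaling of equation~\eqref{eq: key} and the role of $(B)$ in inverting $\ti h^S$.
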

\begin{proof}
Note that the estimate in Proposition~\ref{prop: c0} implies that
\begin{equation*}
\ti h \leq C(K) \frac{\|{\rm Tr}(h^{S})\|_{L^{2}(X)}}{\|h^{S}\|_{L^{2}(X)}} \leq C'(K).
\end{equation*}
Combining this with assumptions $(A), (B), (C)$, we have that $\Delta_{0}\ti h^{S}$ is uniformly bounded on $K$.  The elliptic theory, combined with the fact that $\| \ti h^{S}\|_{L^{2}(X,H_{0})}$=1 implies uniform $L^{p}_{2}(K, H_{0})$ estimates.
\end{proof}

We now show that, when $S$ and $Q$ are locally free on $X$, then condition $(A)$ is sufficient to deduce conditions $(B)$ and $(C)$

\begin{proposition}\label{prop: lower bd}
Suppose that $S$ and $Q$ are locally free on $X$, and assume that condition $(A)$ holds. Then there exists a uniform constant $C$ such that
\begin{equation*}
{\rm Tr} \left( (\ti h^{S})^{-1} \right) \leq C, \qquad \text{ and } \quad |\gamma|^{2} \leq C
\end{equation*}
\end{proposition}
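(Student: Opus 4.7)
The plan is to use the locally-free assumption to promote condition $(A)$ to a uniform $C^{1}$ estimate, then invoke the stability of $S$ via an Uhlenbeck--Yau contradiction to obtain the lower eigenvalue bound on $\tilde{h}^{S}$, and finally to derive the bound on $|\gamma|^{2}$ from the trace of the curvature decomposition combined with elliptic regularity.

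Since $S$ and $Q$ are locally free on $X$, we have $Z_{alg} = \emptyset$ and consequently $\sigma \geq c > 0$ uniformly. Thus $(A)$ reduces to $|\nabla^{0} h^{S^{i}}| \leq C\, \text{Tr}(h^{S^{i}})$ everywhere on $X$, and combined with the $C^{0}$ estimate from Proposition~\ref{prop: c0} this yields uniform $C^{1}$ bounds on $\tilde{h}^{S^{i}}$ over the whole manifold.

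For the lower bound $\text{Tr}((\tilde{h}^{S})^{-1}) \leq C$, I argue by contradiction. If no such bound existed, there would be times $t_{j}$ and points $x_{j}$ where the least eigenvalue of $\tilde{h}^{S}(t_{j})$ at $x_{j}$ tends to zero. The $C^{1}$ bound together with $\|\tilde{h}^{S}\|_{L^{2}(X)} = 1$ permits, via Arzel\`a--Ascoli, extraction of a subsequence converging uniformly to a limit $\tilde{h}_{\infty}$ which is non-negative Hermitian, of unit $L^{2}$ norm, and possesses a zero eigenvalue somewhere. The Uhlenbeck--Yau weak regularity theorem~\cite{UY} (see also \cite{Siu}) then produces weakly holomorphic spectral projections onto the eigenspaces of $\tilde{h}_{\infty}$, yielding a non-trivial saturated coherent subsheaf $S' \subsetneq S$; the standard slope computation then gives $\mu(S') \geq \mu(S)$, contradicting the stability of $S$. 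This is the main technical obstacle of the proof, as verifying the slope inequality cleanly requires careful integration by parts with these singular spectral projections.

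Finally, for $|\gamma|^{2} \leq C$, tracing the identity $\Lambda F|_{S} = \Lambda F^{S} + \Lambda \gamma \wedge \gamma^{\dagger}$ with respect to $H(t)$ gives
\begin{equation*}
|\gamma|^{2} = \text{Tr}\bigl(\Lambda F|_{S}\bigr) - \text{Tr}\bigl(\Lambda F^{S}\bigr).
\end{equation*}
The first term is controlled by the uniform bound on $|\Lambda F|_{H(t)}$ proved earlier in the paper; for the second, the change-of-metric formula $\text{Tr}(\Lambda F^{S}_{t}) = \text{Tr}(\Lambda F^{S}_{0}) + \Delta \log \det h^{S}$ reduces matters to controlling $\Delta \log \det \tilde{h}^{S}$, since the difference $\log \det h^{S} - \log \det \tilde{h}^{S}$ is a time-dependent constant. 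The two-sided bound on $\tilde{h}^{S}$ just established promotes equation~\eqref{eq: key}, treated as a quasi-linear elliptic PDE for $h^{S}$ by substituting the identity $\Lambda F^{S}_{t} = \Lambda F^{S}_{0} + \Lambda \bar{\partial}\bigl((h^{S})^{-1} \partial^{0} h^{S}\bigr)$, into an equation with bounded coefficients and bounded right-hand side. Standard $W^{2,p}$ elliptic regularity then yields $C^{2,\alpha}$ control on $\tilde{h}^{S}$ and hence the desired bound on $|\gamma|^{2}$.
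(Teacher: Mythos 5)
Your overall strategy (use $Z_{alg}=\emptyset$ so that $\sigma\geq c>0$, upgrade $(A)$ plus Proposition~\ref{prop: c0} to a uniform $C^{1}$ bound, extract a $C^{0}$ limit by Arzel\`a--Ascoli) matches the paper up to that point, but both of your key steps have genuine gaps. For the lower bound on $\ti h^{S}$, the step you yourself call ``the main technical obstacle'' is exactly the step that is missing, and it is not clear it can be carried out with the data available: the Uhlenbeck--Yau construction needs the spectral projections of the degenerate limit to be in $L^{2}_{1}$ and weakly holomorphic, and the slope inequality is extracted from the PDE satisfied by the degenerating metrics; here the only equation for $h^{S}$ is \eqref{eq: key}, whose curvature term $\Lambda F^{S}_{t}$ is controlled only from above (via $\Lambda F^{S}\leq \Lambda F|_{S}$), the missing lower control being precisely the term $\Lambda\gamma\wedge\gamma^{\dagger}$ you are trying to bound --- so the argument threatens to be circular. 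The paper takes a different route that avoids this: it uses the Hong--Tian convergence of the Yang--Mills flow away from $Z_{an}$ \cite{HT}, the Bando--Siu extension \cite{BaS}, and the result of \cite{J3} that the limiting sheaf $S_{\infty}$ is isomorphic to $S$ and stable; the limit $w^{S}_{\infty}=(h^{S}_{\infty})^{1/2}$ is shown to satisfy the limiting $\bar\partial$-equation, hence (after extending across the codimension-two set $Z_{an}$ by \cite{Kob}, Proposition 5.21) is a nonzero holomorphic map between stable bundles of equal slope, hence an isomorphism, so $h^{S}_{\infty}$ is nondegenerate and the $C^{0}$ bound on $(\ti h^{S})^{-1}$ follows from uniform convergence. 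If you wish to keep the UY-style contradiction, you must actually produce the weakly holomorphic projection and verify the slope estimate from \eqref{eq: key}; as written this is asserted, not proved.

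The second half does not work as stated. Substituting $\Lambda F^{S}_{t}=\Lambda F^{S}_{0}+\Lambda\bar\partial\bigl((h^{S})^{-1}\partial^{0}h^{S}\bigr)$ back into \eqref{eq: key} yields an identity --- the second-order terms cancel --- so there is no elliptic equation ``with bounded coefficients and bounded right-hand side'' to which $W^{2,p}$ theory applies; read the other way, the right-hand side of \eqref{eq: key} is bounded only if $\Lambda F^{S}_{t}$ is bounded, i.e.\ only if $|\gamma|^{2}$ is already bounded, which is the claim to be proved (this is also why the paper's Lemma~\ref{lem: lp2} invokes condition $(C)$ to bound $\Delta_{0}\ti h^{S}$). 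Moreover, even a genuine $W^{2,p}$ bound on $\ti h^{S}$ gives at best $C^{1,\alpha}$ control, not $C^{2,\alpha}$, and in particular no pointwise bound on $\Delta\log\det h^{S}$, which is exactly what ${\rm Tr}(\Lambda F^{S}_{t})$, and hence $|\gamma|^{2}$, amounts to; if such a purely elliptic bootstrap worked, condition $(A)$ alone would bound $|F^{S}|$ in $C^{0}$, far stronger than anything the paper obtains. The paper's argument is genuinely parabolic: from the lower bound on $\ti h^{S}$ and $(A)$ it bounds $A^{S}-A^{S}_{0}$ and $A^{Q}-A^{Q}_{0}$ in $L^{2}$, bounds $\gamma$ in $L^{2}$ by the Uhlenbeck--Yau inequality \cite{UY} (using that $S$ destabilizes $E$), hence bounds $|A-A_{0}|^{2}_{H_{0}}$ in $L^{1}$, and then applies the parabolic Moser iteration of Theorem~\ref{moser} (which only requires space-time $L^{p}$ integrability for some $p>0$) to the inequality $(\partial_{t}-\Delta)|A-A_{0}|^{2}_{H_{0}}\leq C|A-A_{0}|^{2}_{H_{0}}$ to obtain the $C^{0}$ bound and hence $(C)$. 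You should replace your elliptic-regularity step with an argument of this type.
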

\begin{proof}

Combining assumption $(A)$ with the the $C^{0}$ estimate of Proposition~\ref{prop: c0} we obtain a uniform $C^1(H_{0})$ bound.  Thus, by the Arzel\`a-Ascoli theorem there exists a subsequence that converges in $C^0$ to a limiting map $h^S_\infty$. 

Recall the result of $\cite{HT}$, which states that along the Yang-Mills flow, one can find a subsequence such that $A_j\longrightarrow A_\infty$ on $X\backslash Z_{an}$ in $C^\infty$. The limiting connection is a Hermitian-Yang-Mills connection on a limiting bundle $E_\infty$ defined on $X\backslash Z_{an}$, and by a result of Bando and Siu from \cite{BaS} this bundle $E_\infty$ extends to a limiting reflexive sheaf defined on all of $X$. By applying the convergence results to the subbundle $S$, it follows that $A^S_j\longrightarrow A^S_\infty$ as well, where $A^S_\infty$ is a limiting connection on the reflexive sheaf $S_\infty$. By the work of the first author $S_\infty$ is in fact isomorphic to $S^{**}=S$ and is stable. As a result $S_\infty$ is a vector bundle. 

Our limiting endomorphism $h^S_\infty$ is a $C^0$ limit of a sequence of functions all normalized to have $L^2$ norm one. Since $X$ is compact, uniform convergence implies convergence in $L^2$, thus $h^S_\infty$ is nonzero.  Define $w^S_\infty$ by $(w^S_\infty)^2=h^S_\infty$, and notice it is nonzero as well. As in $\cite{J3}$, we show that in fact $w^S_\infty$ is a holomorphic map from $S_\infty$ to $S$, and because both are stable bundles of the same slope the map must be an isomorphism. Thus $w^S_\infty$ does not degenerate at any point in $X$, and neither does $h^S_\infty$. 

Recall equation $\eqref{action}$, which gives the action of $w_j$ on the connection $A_j$ along the flow. This action descends naturally to $S$, so in particular $w^S_j$ solves the equation:
\be
\bar\pl w_j^S=w_jA_0^S-A_j^S w_j^S.\nonumber
\ee
On compact subsets away from $Z_{an}$ the connection terms $A^S_j$ converges in $C^0$ along a subsequence (see \cite{HT}), and we already have uniform convergence of $w^S_j$, thus $\bar\pl w_j$ converges uniformly as well. By working diagonally on an exhaustion of $\ti X:=X\backslash Z_{an}$ by compact sets we can find a subsequence so that $\bar\pl w_j$ converges uniformly everywhere on $\ti X$.  Thus the following limiting equation is satisfied on $\ti X$:
\be
\bar\pl w_\infty-w_\infty A_0^S+A_\infty^S w_\infty=0.\nonumber
\ee
We can concluded that on this set the map $w^S_\infty$ is a holomorphic section of the bundle $Hom(S,S_\infty)$. By Proposition 5.21 from $\cite{Kob}$, because the set $Z_{an}$ has complex codimension at least two, any section $w_\infty^S$ of $Hom(S,S_\infty)$ defined away from $Z_{an}$ must in fact be holomorphic everywhere. This last fact can be viewed as a type of Riemann extension theorem for sections of vector bundles.

As a result the limiting map $w^S_\infty$ is an isomorphism from $S$ to $S_\infty$ and its rank does not drop. Thus the rank of $h^S_\infty$ does not drop. The $C^0$ bound for $\ti h^S_i{}^{-1}$ follows by uniform convergence of $\ti h^S_j$ to $h^S_\infty$.

Next, we show that condition $(C)$ holds, which is the second estimate in the proposition.  Consider the standard representation of a connection in terms of the endomorphisms $h^{S}$:
\be
A^S-A^S_0=(h^S)^{-1}\nabla^0h^S.\nonumber
\ee
The expression on the right is independent of normalization, and so it holds with $h^{S}$ replaced by $\ti h^{S}$.  We have already shown that $\ti h^{S} > cI$, and so it follows immediately that $|A^S-A^S_0|_{H_0}$ is uniformly controlled in time. As a result it is in $L^2$, and an identical $L^2$ bound also follows for $|A^Q-A^Q_0|_{H_0}$. Applying the following inequality from $\cite{UY}$ (which holds since $S$ destabilizes $E$):
\be
|\gamma|^2\leq\int_X{\rm Tr}((\Lambda F-\mu(E)I)|_S)\leq ||\Lambda F||_{L^2(X)}\leq C,\nonumber
\ee
we see that the second fundamental form is universally bounded in $L^2$ along the flow. We now combine these three $L^2$ bounds as follows. Recall the standard decomposition of the connection $A(t)$ onto $S$ and $Q$.  Since each piece from the decomposition is controlled it follows that $|A-A_0|_{H_{0}}$ is bounded in $L^2$ as well.

Working on the bundle $E$, we define the quantity $S=|A-A_0|^2_{H_0}=|\nabla hh^{-1}|^2_{H_0}$. A standard computation along the Donaldson heat flow (see, for example \cite{McF}) yields
\be
(\pl_t-\Delta)S\leq C\,S.\nonumber
\ee
The right hand side is in $L^1$, thus we can apply the parabolic Moser in Theorem~\ref{moser} (see Section~\ref{parabolic} below) to bound $S$ in $C^0$. As a result the second fundamental form is uniformly bounded and hence $(C)$ holds. 
\end{proof}

\begin{proposition}\label{B-AC}
Suppose that $S$ and $Q$ are locally free, and that conditions $(B)$ and $(C)$ hold. Then condition $(A)$ holds as well. 
\end{proposition}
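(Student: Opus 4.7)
The plan is to start from the standard representation
\begin{equation*}
\nabla^0 h^{S^i} \;=\; h^{S^i}\bigl(A^{S^i} - A_0^{S^i}\bigr),
\end{equation*}
already used in Proposition~\ref{prop: lower bd}. Since $h^{S^i}$ is positive Hermitian, its operator norm satisfies $|h^{S^i}|_{\rm op} \leq \Tr(h^{S^i})$, so the desired gradient bound in $(A)$ reduces to the pointwise estimate $|A^{S^i} - A_0^{S^i}| \leq C\sigma^{-k}$ for the induced Chern connection difference on each factor.

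To achieve this, I would decompose the Chern connection $A$ of $H$ on $E$ using the $H$-orthogonal splitting $E = S^1 \oplus S^{1,\perp_H}$, in which $A$ has block form with diagonal entries $A^{S^1},A^{S^2}$ and off-diagonals $\gamma,-\gamma^\dagger$, and similarly for $A_0$ relative to the $H_0$-orthogonal splitting. Writing $h$ in block form in a fixed $H_0$-orthonormal frame as $h = \left(\begin{smallmatrix} a & b \\ b^\dagger & c \end{smallmatrix}\right)$, a Schur complement computation identifies $h^{S^1}=a$ and $h^{S^2} = c - b^\dagger a^{-1} b$, the $H$-orthogonal projection tilt equals $\pi^H - \pi^{H_0} = \left(\begin{smallmatrix} 0 & a^{-1}b \\ 0 & 0 \end{smallmatrix}\right)$, and the second fundamental form along the flow satisfies $\gamma = -\bar\partial(a^{-1}b)$. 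Assumption $(B)$ then gives $|a^{-1}| \leq C\sigma^{-k}$, assumption $(C)$ bounds $|\bar\partial(a^{-1}b)|$ by $C\sigma^{-k/2}$, and these together control the tilting of the two splittings, so that
\begin{equation*}
A^{S^i} - A_0^{S^i} \;=\; (\pi^H - \pi^{H_0})\,A\,\iota_{S^i} \;+\; \pi^{H_0}(A - A_0)\iota_{S^i}
\end{equation*}
modulo bounded terms involving $\gamma$ and $\gamma_0$ (the latter controlled by local freeness and the smoothness of $H_0$).

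It remains to bound $|A - A_0|_{H_0}$ on $E$, for which I would adapt the parabolic Moser step appearing at the end of the proof of Proposition~\ref{prop: lower bd}: the scalar $\mathcal{S} := |A - A_0|^2_{H_0}$ satisfies a differential inequality of the form $(\partial_t - \Delta)\mathcal{S} \leq C\mathcal{S}$ along the Donaldson heat flow, and Theorem~\ref{moser} upgrades any $L^1$ bound on $\mathcal{S}$ to a pointwise one. The $L^2$ integrability of each block $A^{S^i} - A_0^{S^i}$ can be obtained by testing the trace of the key equation $\Delta_0 h^{S^i} = g^{j\bar k}\nabla_{\bar k} h^{S^i}(h^{S^i})^{-1}\nabla^0_j h^{S^i} + h^{S^i}(\Lambda F_0^{S^i} - \Lambda F^{S^i})$ against a suitable cutoff and invoking the $C^0$ bound on $\Tr(h^{S^i})$ from Proposition~\ref{prop: c0} together with the lower bound $(B)$ on $\tilde h^{S^1}$; the $L^2$ bound on $\gamma$ is immediate from $(C)$. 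The principal obstacle is precisely the quotient case $i=2$: assumption $(B)$ provides only an upper bound on $\tilde h^{S^2}$, so the identity $A^{S^2} - A_0^{S^2} = (h^{S^2})^{-1}\nabla^0 h^{S^2}$ cannot be inverted directly, and one is forced to recover the quotient connection bound by projecting the global $C^0$ bound on $|A - A_0|$ rather than by working intrinsically on $S^2$; care must also be taken throughout not to invoke $(A)$ circularly when deriving the integral estimates on $\nabla^0 h^{S^i}$.
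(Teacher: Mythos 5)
Your route is genuinely different from the paper's, but as written it has two gaps that prevent it from closing. First, the parabolic Moser step needs, as input, a uniform spacetime $L^{1}$ bound on $\mathcal{S}=|A-A_{0}|^{2}_{H_{0}}$, hence uniform $L^{2}(X)$ control of \emph{every} block of $A-A_{0}$, including the quotient block $A^{S^{2}}-A_{0}^{S^{2}}$. You correctly identify this as the principal obstacle, but the remedy you propose --- recovering the quotient block by ``projecting the global $C^{0}$ bound on $|A-A_{0}|$'' --- is circular: that global $C^{0}$ bound is precisely the output of the Moser argument whose hypothesis you have not yet verified. (The natural repair, not in your proposal, is to dualize: $Q^{*}\subset E^{*}$ is a subbundle, the upper bound on $h^{S^{2}}$ in $(B)$ becomes a lower bound on the dual side, and the second fundamental form of $Q^{*}\subset E^{*}$ is controlled by $(C)$, so the $i=1$ argument can be repeated there.) Second, the claim that $(B)$ and $(C)$ control the tilt $\pi^{H}-\pi^{H_{0}}=a^{-1}b$ is not justified: a pointwise bound on $a^{-1}$ together with a pointwise bound on $\bar\partial(a^{-1}b)=\pm\gamma$ does not bound $a^{-1}b$ itself (the ``holomorphic part'' is unconstrained, and there is no a priori uniform bound on the off-diagonal block $b$ of $h$, since $h$ on the unstable bundle $E$ need not stay bounded along the flow); moreover your formula for $A^{S^{i}}-A_{0}^{S^{i}}$ multiplies this tilt by the full connection $A$, which is likewise not uniformly bounded. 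A minor point: under the hypotheses of this proposition $S$ and $Q$ are locally free on all of $X$, so $Z_{alg}=\emptyset$ and $\sigma$ is bounded below; the $\sigma$-weights are therefore harmless here, but the global $L^{2}$ bound on $\gamma$ is better quoted from the Uhlenbeck--Yau destabilizing inequality, as in Proposition~\ref{prop: lower bd}, than from $(C)$.

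For comparison, the paper's proof avoids any global control of $A-A_{0}$ on $E$: the $C^{0}$ bound of Proposition~\ref{prop: c0} plus an integration by parts on equation~\eqref{eq: key} give a uniform $L^{2}_{1}(X,H_{0})$ bound on $\tilde h^{S}$, and then condition $(A)$ is obtained by Siu's elliptic estimate combined with Donaldson's blow-up argument applied directly to the equation satisfied by $h^{S}$ (and identically on the quotient). There, $(C)$ enters only to bound $|\Lambda F^{S}|$, via $\Lambda F|_{S}=\Lambda F^{S}+\Lambda\gamma\wedge\gamma^{\dagger}$ and the uniform bound on $|\Lambda F|$, while $(B)$ supplies the two-sided $C^{0}$ control needed in the blow-up; no decomposition of the full connection and no comparison of the two orthogonal splittings is required, which is exactly the part your argument cannot currently supply.
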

\begin{proof}
We argue on $S$, the proof for $Q$ being identical.   The $C^{0}$ estimate in Proposition~\ref{prop: c0}, combined with an integration by parts on equation~\eqref{eq: key} implies a uniform bound for $\ti h^{S}$ in $L^{2}_{1}(X, H_{0})$.  Combining this with the lower bound from assumption $(B)$, we can apply an argument of Siu \cite{Siu} together with a blow-up argument of Donaldson \cite{Don1} to prove $(A)$.  We refer the reader to \cite{Siu} for the details. The assumption $(C)$ is needed to bound $|\Lambda F^S|$, which is an important part of Siu's argument.  Once condition $(A)$ is verified, Proposition~\ref{prop: lower bd} implies that $(C)$ holds also.
\end{proof}  

Finally, we prove Proposition~\ref{prop: curv est}
\begin{proof}[Proposition~\ref{prop: curv est}]
The $L^{2}_{p}$ estimates of Lemma~\ref{lem: lp2}, together with the lower bound for $\ti h^{S}$ imply that $|F^{S}|_{L^{p}(K, H_{0})}$,  is bounded, and similarly for $|F^{Q}|_{L^{p}(K,H_{0})}$.  Then the upper and lower bounds for $\ti h^{S}, \ti h^{Q}$ show these norms are equivalent to the norms taken with respect to $H_{t}$.  Finally, the estimate for $|\gamma|_{L^{p}(K,H_{t})}$ follows from the upper bound for $|F^{S}|_{L^{p}(K, H_{0})}$ and the inequality
\begin{equation*}
|\gamma|_{H_{t}}^{2} + {\Tr}( \Lambda F^{S}) \leq |\Lambda F(t)|_{H_{t}}\leq C 
\end{equation*}
\end{proof}
In order to achieve an $L^p$ bound on $F$, it remains to control $\nabla \gamma$. We accomplish this by applying parabolic Moser iteration to an evolution inequality we derive in the next section.

\section{Computation of the heat operator}\label{evolve section}

We now compute the heat operator on various important terms. First we note that all norms in this section are taken with respect to the corresponding evolving metric. For us this has the following implication. If $A$ is a fixed endomorphism of $S$, the norm squared is given by
 \be
 |A|^2=\langle A,A\rangle={\rm Tr}(AA^*).\nonumber
 \ee
As stated the adjoint is taken with respect to the evolving metric $H^S$. In a local frame this adjoint can be written explicitly as $(A^*)^\al{}_\b=H^{\al\bar\gamma}\overline{A^\nu{}_\gamma}H_{\bar\nu\b}.$ Thus, taking the time derivative we see $\pl_t (A^*)=[(\Lambda F-\mu(E)I)|_S,A^*].$ We know along the Donaldson heat flow that the term $|\Lambda F|_{H}$ is controlled uniformly from above, which allows us to conclude  $\pl_t |A|^2\leq C|A|^2$. For all other associated bundles in the computations that follow we achieve a similar estimate. Furthermore, when taking the laplacian of a norm, we always use the fact that:
\be
\Delta|A|^2=\langle \Delta A,A\rangle+\langle A,\bar\Delta A\rangle+|\nabla A|^2+|\bar\nabla A|^2.\nonumber
\ee
We need to change $\Delta$ to $\bar \Delta$, and depending on which bundle we are working on this results in curvature terms. Again if $A$ is an endomorphism of $S$ we have $\bar\Delta A=-[\Lambda F^S,A]$. It is important to keep these extra curvature terms in mind, although for us when they show up in an equation they are always absorbed into nearby terms. 

We begin by taking the time derivative of the projection $\pi$:
\be
\dot\pi=\pi(h^{-1}\dot h)(I-\pi)=-\pi\Lambda F(I-\pi)=g^{j\bar k}\nabla_j\nabla_{\bar k}\pi=-g^{j\bar k}\nabla_j\gamma_{\bar k}.\nonumber
\ee
Here the second to last equality follows from the fact that the component of $\Lambda F$ that sends $Q$ to $S$ equals $-g^{j\bar k}\nabla_j\nabla_{\bar k}\pi$, and the last equality follows since the second fundamental form is given by $\gamma_{\bar k}=-\nabla_{\bar k}\pi$. This allows us to compute the time derivative of the second fundamental form $\gamma$:
\bea
\pl_t(\gamma_{\bar k})&=&-\nabla_{\bar k}\dot\pi\nonumber\\
&=&g^{\ell\bar m}\nabla_{\bar k}\nabla_\ell\gamma_{\bar m}\nonumber\\
&=&g^{\ell\bar m}([\nabla_{\bar k},\nabla_\ell]\gamma_{\bar m}+\nabla_\ell\nabla_{\bar k}\gamma_{\bar m}).\nonumber
\eea
Note $\nabla_{\bar k}\gamma_{\bar m}=\nabla_{\bar m}\gamma_{\bar k}$, since $\gamma$ is $\bar\pl$ closed. Because $\gamma$ is a morphism from $Q$ to $S$ taking the commutator of derivatives gives:
\bea
\label{gammaevolve}
\pl_t(\gamma_{\bar k})&=&g^{\ell\bar m}(R_{\bar k\ell}{}^{\bar p}{}_{\bar m}\gamma_{\bar p}-F^S_{\bar k\ell}\gamma_{\bar m}+\gamma_{\bar m}F^Q_{\bar k\ell})+\Delta\gamma_{\bar k}.
\eea
Thus we can take the derivative of the norm squared of $\gamma$:
\bea
\pl_t|\gamma|^2&\leq&\langle \dot\gamma,\gamma\rangle+\langle \gamma,\dot\gamma\rangle+C|\gamma|^2\nonumber\\
&\leq&C|\gamma|^2(1+|F^S|+|F^Q|)+\langle \Delta \gamma,\gamma\rangle+\langle \gamma,\Delta \gamma\rangle.\nonumber
\eea
In conclusion the heat operator on $|\gamma|^2$ is bounded  by:
\be
\label{gamma}
(\pl_t-\Delta)|\gamma|^2\leq C|\gamma|^2(1+|F^S|+|F^Q|).
\ee
Next we compute the heat operator on $|\nabla\gamma|^2$ starting with the time derivative of $\nabla_j\gamma_{\bar k}$. Suppose $\nabla^S$ is the covariant derivative on $S$. Then the time derivative of the connection is given by
\be
\dot \nabla_j^S=\nabla_j^S(h^{-1}h|_{S})=\nabla_j^S(\Lambda(F^S+\gamma\wedge\gamma^\dagger)).\nonumber
\ee
Now, $\gamma_{\bar k}$ is a morphism from $Q$ to $S$, thus to differentiate it we need the covariant derivative on $Hom(Q,S)$. Taking the time derivative we see:
\be
\pl_t(\nabla_j^{Hom(Q,S)}\gamma_{\bar k})=\nabla_j^S(\Lambda F^S+\Lambda\gamma\wedge\gamma^\dagger)\gamma_{\bar  k}+\gamma_{\bar k}\nabla_j^Q(\Lambda F^Q-\Lambda\gamma\wedge\gamma^\dagger)+\nabla_j\dot\gamma_{\bar k}.\nonumber
\ee
Combing this with \eqref{gammaevolve} yields:
\be
\nabla_j\dot\gamma_{\bar k}=g^{\ell\bar m}\nabla_j\left(R_{\bar k\ell}{}^{\bar p}{}_{\bar m}\gamma_{\bar p}-F^S_{\bar k\ell}\gamma_{\bar m}+\gamma_{\bar m}F^Q_{\bar k\ell}\right)+\nabla_j\Delta\gamma_{\bar k}.\nonumber
\ee
Now, we want to interchange the order of $\Delta$ and $\nabla_j$:
\be
g^{\ell\bar p}\nabla_j\nabla_{\ell}\nabla_{\bar p}\gamma_{\bar m}=g^{\ell\bar p}\nabla_\ell\nabla_{j}\nabla_{\bar p}\gamma_{\bar m}=\Delta\nabla_{j}\gamma_{\bar m}-g^{\ell\bar p}\nabla_{\ell}(R_{\bar p j}{}^{\bar q}{}_{\bar m}\gamma_{\bar q}+F_{\bar p j}^S\gamma_{\bar m}-\gamma_{\bar m}F^Q_{\bar pj}).\nonumber
\ee
Putting everything together we bound the time derivative in the following estimate:
\bea
\pl_t|\nabla\gamma|^2&\leq&\langle \pl_t(\nabla\gamma),\nabla\gamma\rangle+\langle \nabla\gamma,\pl_t(\nabla\gamma)\rangle+C|\nabla\gamma|^2\nonumber\\
&\leq&C|\nabla\gamma|^2(1+|\gamma|^2+|F^S|+|F^Q|)\nonumber\\
&&+|\gamma||\nabla\gamma|(|\nabla F^S|+|\nabla F^Q|)+\langle \Delta \gamma,\gamma\rangle+\langle \gamma,\Delta \gamma\rangle\nonumber
\eea
Thus the heat operator is controlled by:
\bea
\label{nablagamma}
(\pl_t-\Delta)|\nabla\gamma|^2&\leq&C|\nabla\gamma|^2(1+|\gamma|^2+|F^S|+|F^Q|)\nonumber\\
&&+|\gamma||\nabla\gamma|(|\nabla F^S|+|\nabla F^Q|)-|\nabla\nabla \gamma|^2-|\bar\nabla\nabla\gamma|^2.
\eea
Next we turn to the curvature term $|F^S|^2$. Note that for any path of metrics $h^S(t)$, the time derivative of the curvature is given by $\pl_t F^S_{\bar kj}=-\nabla_{\bar k}\nabla_j((h^S)^{-1}\pl_t h^S)$ (for details see $\cite{Siu}$). Thus along the Donaldson heat flow we have $\pl_t F^S_{\bar kj}=\nabla_{\bar k}\nabla_j(\Lambda F|_S)$. Applying the derivative to the norm squared yields:
\bea
\pl_t|F^S|^2&\leq&2g^{j\bar k}g^{\ell\bar m}{\rm Tr}(\pl_tF_{\bar mj}^S(F_{\bar \ell k}^S)^*)+C|F^S|^2\nonumber\\
&=&2g^{j\bar k}g^{\ell\bar m}{\rm Tr}(\nabla_{\bar m}\nabla_j(\Lambda F|_S)(F_{\bar \ell k}^S)^*)+C|F^S|^2\nonumber\\
&=&2g^{j\bar k}g^{\ell\bar m}{\rm Tr}(\nabla_{\bar m}\nabla_j(\Lambda F^S+\Lambda \gamma\wedge\gamma^\dagger)(F_{\bar \ell k}^S)^*)+C|F^S|^2\nonumber\\
&\leq&2g^{j\bar k}g^{\ell\bar m}{\rm Tr}(\nabla_{\bar m}\nabla_j(\Lambda F^S)(F_{\bar \ell k}^S)^*)+(|\bar\nabla\nabla\gamma||\gamma|+|\nabla\gamma|^2)|F^S|+C|F^S|^2.\nonumber
\eea
Applying the second Bianchi identity to the first term on the right we can get a laplacian out of it, but at the cost of an extra curvature term:
\bea
\nabla_{\bar m}\nabla_j(\Lambda F^S)&=&g^{p\bar q}\nabla_{\bar m}\nabla_jF^S_{\bar qp}\nonumber\\
&=&g^{p\bar q}\nabla_{\bar m}\nabla_pF^S_{\bar qj}\nonumber\\
&=&g^{p\bar q}[\nabla_{\bar m},\nabla_p]F^S_{\bar qj}+g^{p\bar q}\nabla_p\nabla_{\bar q}F^S_{\bar mj}.\nonumber
\eea
It follows that:
\be
g^{j\bar k}g^{\ell\bar m}{\rm Tr}(\nabla_{\bar m}\nabla_j(\Lambda F^S)(F_{\bar k\ell}^S)^*)\leq 2g^{j\bar k}g^{\ell\bar m}{\rm Tr}(\Delta(F^S_{\bar m j})(F_{\bar k\ell}^S)^*)+C|F^S|^3.\nonumber
\ee
Thus
\be
\label{FS}
(\pl_t-\Delta)|F^S|^2\leq C(|F^S|^2+|F^S|^3)+(|\bar\nabla\nabla\gamma||\gamma|+|\nabla\gamma|^2)|F^S|-|\nabla F^S|^2.
\ee

The computation of the heat operator applied to $|F^Q|^2$ follows in exactly the same fashion as for $|F^S|^2$ above. Thus we conclude:
\be
\label{FQ}
(\pl_t-\Delta)|F^Q|^2\leq C(|F^Q|^2+|F^Q|^3)+(|\bar\nabla\nabla\gamma||\gamma|+|\nabla\gamma|^2)|F^Q|-|\nabla F^Q|^2.
\ee

\section{The Parabolic Moser Iteration}
\label{parabolic}

In this section, we prove a version of Moser's $C^{0}$ estimate for parabolic equations.  When $p\geq 2$, Theorem~\ref{moser} is essentially the same as Theorem 2.1 in \cite{Ye}, with some minor modifications for our setting.  For $p\in(0,2)$ we adapt the standard argument from the elliptic case; see for instance \cite{LH}.  We include the proof for the reader's convenience.
\begin{theorem}\label{moser}
Fix a point $x_{0} \in X\backslash Z_{alg}$, and fix a compact set $K\subset X\backslash Z_{alg}$ so that $x\in K$.  Let $2R=\text{dist}(x_{0},\del K)$.  Suppose that $u$ is a nonnegative Lipschitz function on $X \times [0,T]$ satisfying
\begin{equation*}
(\ddt  - \Delta) u \leq \Theta u
\end{equation*}
for a non-negative function $\Theta (x,t) \in L^{q}(K)$, for some $q \gg n$.  Suppose moreover that there is a positive constant $0<A<\infty$ so that
\begin{equation*}
\sup_{[0,T]}\|\Theta \|_{L^{q}(K)}(t) \leq A.
\end{equation*}
Then, on $B(x_{0}, R) \times [0,T]$, and for any $p >0$, we have the estimate
\begin{equation*}
\begin{aligned}
|u(x,t)| \leq C(g,n,q,p)&\left[A^{n/(q-n)} + \frac{1}{t} + \frac{1}{R^{2}}\right]^{\frac{n+1}{p}} \\
& \cdot (\int_{0}^{T} \int_{B(x_{0},2R)} u^{p} dV dt)^{\frac{1}{p}}.
\end{aligned}
\end{equation*}
\end{theorem}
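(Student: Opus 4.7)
The proof follows the standard parabolic Moser iteration scheme. I would treat the case $p\geq 2$ first by a direct Moser iteration, and then reduce the case $0<p<2$ to $p=2$ via an interpolation bootstrap. Everything proceeds along the lines of Theorem~2.1 of \cite{Ye}; the only wrinkle is that the drift coefficient $\Theta$ is only in $L^q$, so one must extract the correct exponent in $A$ via H\"older's inequality.

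For the energy estimate, fix two nested parabolic cylinders $Q_1=B(x_0,r_1)\times[\tau_1,T]$ and $Q_2=B(x_0,r_2)\times[\tau_2,T]$ with $r_1<r_2\leq 2R$ and $0<\tau_2<\tau_1$, and let $\phi(x,t)=\eta(x)\chi(t)$ be a Lipschitz parabolic cutoff equal to $1$ on $Q_1$ and supported in $Q_2$. Multiplying the differential inequality by $u^{p-1}\phi^2$ and integrating by parts in space yields, after the standard manipulation,
\begin{equation*}
\sup_{s\in[\tau_2,T]}\int \phi^2 u^p\,dV+\int_{Q_2}|\nabla(\phi u^{p/2})|^2\,dV\,dt\leq C\int_{Q_2}\bigl(|\nabla\phi|^2+\phi|\partial_t\phi|\bigr)u^p\,dV\,dt+C\int_{Q_2}\Theta\phi^2 u^p\,dV\,dt.
\end{equation*}
The key step is to absorb the $\Theta$ term: H\"older in the spatial variable gives $\int\Theta\phi^2u^p\leq\|\Theta\|_{L^q_x}\|\phi^2 u^p\|_{L^{q/(q-1)}_x}$, and interpolating $L^{q/(q-1)}$ between $L^1$ and the spatial Sobolev-critical exponent, then integrating in time and applying Young's inequality, one can absorb an $\epsilon$-small multiple of $\int|\nabla(\phi u^{p/2})|^2$ into the left-hand side while producing a factor of $A^{n/(q-n)}$ on what remains.

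Combining the resulting estimate with the parabolic Sobolev inequality
\begin{equation*}
\int_{Q_2}|f|^{2\beta}\,dV\,dt\leq C\Bigl(\sup_t\int|f|^2\,dV\Bigr)^{\beta-1}\int_{Q_2}|\nabla f|^2\,dV\,dt,\qquad \beta=\tfrac{n+1}{n},
\end{equation*}
applied to $f=\phi u^{p/2}$, yields a reverse H\"older inequality of the form
\begin{equation*}
\Bigl(\int_{Q_1}u^{p\beta}\,dV\,dt\Bigr)^{1/\beta}\leq C\bigl[A^{n/(q-n)}+(r_2-r_1)^{-2}+(\tau_1-\tau_2)^{-1}\bigr]\int_{Q_2}u^p\,dV\,dt.
\end{equation*}
Iterating on a shrinking sequence of cylinders with $r_k=R(1+2^{-k})$, $\tau_k=t(1-2^{-k})$, and exponents $p_k=2\beta^k$, then summing the resulting geometric series, proves the bound for $p=2$. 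The case $0<p<2$ follows by applying the $L^\infty$--$L^2$ bound on a nested sequence of cylinders, combining it with the trivial interpolation $\int u^2\leq (\sup u)^{2-p}\int u^p$, and absorbing the sup term via Young's inequality and a standard iteration lemma, as in \cite{LH}.

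The principal obstacle is bookkeeping: one must choose the absorption parameters $\epsilon_k$ at each stage and the geometric decay of $r_k,\tau_k$ so that the product of the constants $C_k$ remains finite, and in particular so that the sharp exponent $n/(q-n)$ on $A$ survives the iteration with the claimed power $(n+1)/p$ in the final estimate. The exponent $n/(q-n)$ itself arises directly from the H\"older interpolation in the absorption step, and the hypothesis $q\gg n$ is precisely what is needed for this absorption to close. Once the bookkeeping is handled, the remainder is a routine adaptation of Moser's elliptic technique to the parabolic setting.
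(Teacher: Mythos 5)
Your proposal is correct and follows essentially the same route as the paper: a Caccioppoli-type energy estimate in which the $\Theta$ term is absorbed via H\"older, interpolation and the Sobolev inequality (producing the $A^{n/(q-n)}$ factor), a parabolic reverse H\"older inequality with gain $\beta=(n+1)/n$, Moser iteration on shrinking cylinders for $p\geq 2$, and the reduction of $0<p<2$ to the $L^\infty$--$L^2$ bound via the interpolation $\int u^2\leq(\sup u)^{2-p}\int u^p$ and the Han--Lin iteration lemma. This is precisely the paper's adaptation of Theorem 2.1 of \cite{Ye} combined with the scaling argument from \cite{LH}.
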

\begin{proof}

We prove the theorem first for $p\geq2$.  The estimate for $p\in(0,2)$ is obtained by a standard scaling argument.  We begin by computing
\begin{equation*}
\frac{1}{p}\ddt \int u^{p}\eta^{2} dV \leq \int (\Theta u^{p})\eta^{2} - \nabla(\eta^{2}u^{p-1}) \cdot \nabla u dV
\end{equation*}
We show how to deal with the first term.  Using that $\Theta $ is uniformly bounded in $L^{q}(B(x_{0},2R)) = L^{q}(B)$ for large $q >n$, we have
\begin{equation*}
\begin{aligned}
\int \Theta  u^{p} \eta^{2} &= \int \Theta  (\eta u^{p/2})^{2} \leq \left(\int_{B(x_{0},2R)} \Theta ^{q}\right)^{1/q} \left( \int (\eta u^{p/2})^\frac{2q}{q-1} \right) ^{1-\frac{1}{q}}\\
&\leq \| \Theta \|_{L^{q}(B)} \left( \int (\eta u^{p/2})^\frac{2q}{q-1} \right) ^{1-\frac{1}{q}}
\end{aligned}
\end{equation*}
Now, since $q>n$, we have $2^{*} = \frac{2n}{n-1} > \frac{2q}{q-1} > 2$.  Hence, using the interpolation inequality, and the Sobolev inequality we have
\begin{equation*}
\| \eta u^{p/2} \|_{L^{\frac{2q}{q-1}}(X)} \leq \epsilon \| \eta u^{p/2} \|_{L^{2*}(X)} + C(n,q)\epsilon^{-\frac{n}{n-q}}\|\eta u^{p/2}\|_{L^{2}}
\end{equation*}
for any $\epsilon >0$.  Choose $\epsilon = \sqrt{\frac{1}{2p \|\Theta \|_{L^{q}(B)}}}$.  Then we obtain  
\begin{equation*}
\int \Theta  u^{p} \eta^{2} \leq \frac{1}{p} \int |\nabla(\eta u^{p/2})|^{2} + C(n,q)\|\Theta \|^{n/(q-n)}_{L^{q}(B)}p^{n/ (q-n)} \|\eta u^{p/2}\|^{2}_{L^{2}}.
\end{equation*}
The remaining terms we treat as follows.
\begin{equation*}
\begin{aligned}
-\int  \nabla(\eta^{2}u^{p-1}) \cdot \nabla u dV &= \frac{-4(p-1)}{p^{2}} \int \eta^{2} |\nabla u^{p/2}|^{2}dV - \int 2u^{p-1}\eta \nabla \eta \cdot \nabla u dV \\
&= \frac{-4(p-1)}{p^{2}} \left[ \int |\nabla( \eta u^{p/2})|^{2} + u^{p}|\nabla \eta|^{2} - 2 u^{p/2}\nabla(\eta u^{p/2})\cdot \nabla \eta dV \right] \\
&\qquad -  \int 2u^{p-1}\eta \nabla \eta \cdot \nabla u dV
\end{aligned}
\end{equation*}
The final term we write as
\begin{equation*}
\begin{aligned}
2u^{p-1}\eta\nabla \eta \cdot \nabla u = \frac{4}{p} \left[ u^{p/2}\nabla(u^{p/2} \eta) \cdot \nabla \eta - u^{p}|\nabla \eta|^{2}\right]
\end{aligned}
\end{equation*}
In summation, we obtain
\begin{equation*}
\begin{aligned}
-\int  \nabla(\eta^{2}u^{p-1}) \cdot \nabla u dV &= \frac{-4(p-1)}{p^{2}} \int |\nabla( \eta u^{p/2})|^{2} dV\\
&+ \frac{4}{p^{2}}\int u^{p}|\nabla \eta|^{2} dV + \frac{4(p-2)}{p^{2}} \int u^{p/2}\nabla(u^{p/2} \eta) \cdot \nabla \eta dV\\
&\leq -\frac{2}{p} \int  |\nabla( \eta u^{p/2})|^{2} dV + \frac{2}{p} \int u^{p}|\nabla \eta|^{2} dV
\end{aligned}
\end{equation*}
Putting all of this together, we get

\begin{equation}\label{iterate gen}
\begin{aligned}
\ddt \int u^{p}\eta^{2}dV + \int  |\nabla( \eta u^{p/2})|^{2} dV &\leq  2\int u^{p}|\nabla \eta|^{2} dV\\ & \qquad + C(n,q)\|\Theta \|^{n/(q-n)}_{L^{q}(B)}p^{q/ (q-n)} \int u^{p}\eta^{2} dV
\end{aligned}
\end{equation}
Fix $T>2$, and choose $0 < \tau < \tau' < T$, and define
\begin{equation*}
\psi(t) =  \left \{ 
	\begin{array}{ll}
	0 &: 0\leq t \leq \tau\\
	(t-\tau)/(\tau'-\tau) &: \tau \leq t \leq \tau' \\
	1 &: \tau' \leq t \leq T
	\end{array}
	\right.
\end{equation*}
Multiplying equation~\eqref{iterate gen} by $\psi$, we compute
\begin{equation*}
\begin{aligned}
\ddt \left( \psi \int u^{p}\eta^{2} \right) +\psi \int  |\nabla( \eta u^{p/2})|^{2} dV &\leq 2\psi\int u^{p}|\nabla \eta|^{2} dV \\
& + \left(C(n,q)\|\Theta \|^{n/(q-n)}_{L^{q}(B)}p^{q/ (q-n)}\psi + \psi'\right) \int u^{p}\eta^{2} dV
\end{aligned}
\end{equation*}
To simplify notation, let us institute $A =C(n,q)\|\Theta \|^{n/(q-n)}_{L^{q}(B)}$. Fix any $s \geq \tau'$, and integrate from $0$ to $s$ to obtain
\begin{equation}\label{iterate gen 1}
\begin{aligned}
\int u^{p}(s)\eta^{2} dV & + \int_{\tau'}^{s}\int  |\nabla( \eta u^{p/2})|^{2} dV dt \\
&\leq 2 \int_{\tau}^{T} \int u^{p}|\nabla \eta|^{2} dV dt + \left(Ap^{q/ (q-n)} + \frac{1}{\tau'-\tau}\right) \int_{\tau}^{T} \int u^{p}\eta^{2}.
\end{aligned}
\end{equation}
We now apply H\"older's inequality and the Sobolev inequality to see that
\begin{equation}
\begin{aligned}
\int_{\tau'}^{T} \int u^{p(1+\frac{1}{n})}\eta^{2+\frac{2}{n}} &\leq \int_{\tau'}^{T} \left( \int u^{p}\eta^{2} dV\right)^{1/n} \left(\int u^{\frac{pn}{n-1}}\eta^{\frac{2n}{n-1}}dV \right)^{\frac{n-1}{n}} dt\\
& \leq C(g) \left( \sup_{\tau' \leq t \leq T} \int u^{p}\eta^{2}\right)^{1/n}\int_{\tau'}^{T} \int |\nabla(\eta u^{p/2})|^{2} dV dt\\
&\leq C(g) \left[ 2 \int_{\tau}^{T} \int |\nabla \eta|^{2} u^{p} dV dt  +  \left(Ap^{q/ (q-n)}  + \frac{1}{\tau'-\tau}\right)\int_{\tau}^{T} \int u^{p}\eta^{2} dV dt \right] ^{1+ \frac{1}{n}}
\end{aligned}
\end{equation}
where the last line follows from the estimate~\eqref{iterate gen 1}.  Set 
\begin{equation*}
H(p,\tau,R) = \int_{\tau}^{T} \int_{B(x_{0}, R)} u^{p} dV dt
\end{equation*}
for $0 < \tau <T$ and $0 < R < \text{dist}(x_{0}, Z_{alg})$.  Given $0<R' <R < \text{dist}(x_{0}, Z_{alg})$ we define 
\begin{equation*}
\eta(x) =  \left \{ 
	\begin{array}{ll}
	1 &: x \in B(x_{0}, R')\\
	1 - \frac{1}{R-R'}(d(x_{0},x) -R) &: x \in B(x_{0}, R) \backslash B(x_{0}, R')\\
	0 &: x \in B(x_{0}, R)^{c}
	\end{array}
	\right.
\end{equation*}
Observe that $|\nabla \eta | \leq \frac{1}{R-R'}$.  Then we have proved
\begin{equation*}
H(p(1+\frac{1}{n}), \tau', R') \leq C(g)\left[ Ap^{q/q-n} + \frac{1}{\tau'-\tau} + \frac{1}{(R-R')^{2}}\right]^{1+\frac{1}{n}} H(p,\tau, R)^{1+\frac{1}{n}}
\end{equation*}
Now, fix $R = \frac{1}{2} \text{dist}(x_{0}, Z_{alg})$, choose $t \in(0,T]$, and set $\mu = 1+ \frac{1}{n}$, $p_{k} = \mu^{k}p_{0}$, $\tau_{k} = (1-\frac{1}{\mu^{k+1}})t$.  Choose $\delta \in (0,1)$ and set $R_{k} = R(\delta+(1-\delta)\mu^{-k})$.   Then we have
\begin{equation*}
\begin{aligned}
H(&p_{k+1}, \tau_{k+1}, R_{k+1})^{\frac{1}{p_{k+1}}} \\
&\leq C(g)^{\frac{1}{p_{k+1}}}\left[Ap^{q/(q-n)}\mu^{kq/(q-n)} + \mu^{k}\frac{\mu}{t(\mu-1)} +\mu^{2k} \frac{4\mu^{2}}{R^{2}(1-\delta)^{2}(\mu-1)^{2}} \right]^{\frac{1}{p_{k}}} H(p_{k}, \tau_{k}, R_{k})^{\frac{1}{p_{k}}}.
\end{aligned}
\end{equation*}
For simplicity, we assume that $q>2n$ so that $q/(q-n) <2$.  Then we have
\begin{equation*}
\begin{aligned}
H(&p_{k+1}, \tau_{k+1}, R_{k+1})^{\frac{1}{p_{k+1}}} \\
&\leq C(g)^{\frac{1}{p_{k+1}}}\left[Ap^{q/(q-n)} +\frac{\mu}{t(\mu-1)} +\frac{4\mu^{2}}{R^{2}(1-\delta)^{2}(\mu-1)^{2}} \right]^{\frac{1}{p_{k}}}\mu^{\frac{2k}{p_{k}}} H(p_{k}, \tau_{k}, R_{k})^{\frac{1}{p_{k}}}.
\end{aligned}
\end{equation*}
Iterating, we obtain that, for all $(x,t) \in B(x_{0}, \delta R) \times [0,T]$, we have
\begin{equation*}
\begin{aligned}
|u(x,t)| \leq C(g,n)^{\frac{1}{p}} &\left[C(n,q)\|\Theta \|^{n/(q-n)}_{L^{q}(B(x_{0},R)}p^{q/(q-n)} + \frac{c(n)}{t} + \frac{c(n)}{(1-\delta)^{2}R^{2}}\right]^{\frac{n+1}{p}} \\
& \cdot (\int_{0}^{T} \int_{B(x_{0},2R)} u^{p} dV dt)^{\frac{1}{p}}
\end{aligned}
\end{equation*}
In order to obtain the estimate for $p\in (0,2)$, we use a scaling argument, which is adapted from the argument for elliptic equations; see \cite{LH}.
Fix some $T$, and let $Q(r,t) = B(x_{0},r) \times [t, T]$, and from now on suppress the dependence on $x_{0}$.  The estimate we have just proven shows that if $R > R'$ and $t<t'<T$ then
\begin{equation*}
\sup_{Q(R',t')}u \leq C\left[ \frac{1}{(t'-t)} + \frac{1}{(R-R')^{2}}+B\right]^{\frac{n+1}{2}} \left(\int_{Q(R,t)} u^{2}\right)^{1/2}
\end{equation*}
For $p\in (0,2)$, we write
\begin{equation*}
 \left(\int_{Q(R,t)} u^{2}\right)^{1/2} \leq \sup_{Q(R,t)}u^{1-\frac{p}{2}}\left(\int_{Q(R,t)}u^{p}\right)^{1/2}.
\end{equation*}
By H\"older's inequality, we obtain
\begin{equation*}
\sup_{Q(R',t')}u \leq \frac{1}{2}\sup_{Q(R,t)}u + C\left[ \frac{1}{(t'-t)} + \frac{1}{(R-R')^{2}}+B\right]^{\frac{n+1}{p}} \left(\int_{Q(R,t)} u^{p}\right)^{1/p}
\end{equation*}
Let $f(X,y) := \sup_{Q(X,y)}u$.  Then the above inequality says that if $X'<X$ and $T>y'>y$, then
\begin{equation*}
f(X',y') \leq \frac{1}{2} f(X,y) + A\left[ \frac{1}{(y'-y)} + \frac{1}{(X-X')^{2}}+B\right]^{\frac{n+1}{p}}.
\end{equation*}
We are reduced to proving the following lemma, which is the analog of Lemma 4.3 in \cite{LH}.
\begin{lemma}
Let $f(X,y) \geq 0$ be bounded in $ \Sigma:= [L_{0},L_{1}] \times [s_{1},s_{0}]\subset R^{2}_{\geq 0}$.  Suppose that, for $(X,y), (X',y')  \in \Sigma$ satisfying $X'<X$ and $y'>y$ we have
\begin{equation*}
f(X',y') \leq \theta f(X,y) + A\left[ \frac{1}{(y'-y)} + \frac{1}{(X-X')^{2}}+B\right]^{\alpha}
\end{equation*}
for some $\theta \in [0,1)$.  Then for any $(X,y), (X',y')  \in \Sigma$ satisfying $X'<X$ and $y'>y$ there holds
\begin{equation*}
f(X',y') \leq c(\alpha, \theta)A \left[ \frac{1}{(y'-y)} + \frac{1}{(X-X')^{2}} +B\right]^{\alpha}.
\end{equation*}
\end{lemma}
\begin{proof}
First, choose a number $\tau$ satisfying $\theta <\tau^{\alpha} <1$.  Define a sequence by $R_{0} = X'$ and $t_{0} = y'$, and set
\begin{equation*}
R_{i+1} := R_{i} + (1-\tau)\tau^{i}(X-R_{0}) \quad t_{i+1} = t_{i}-(1-\tau^{2})\tau^{2i}(t_{0}-y).
\end{equation*}
Note that $R_{\infty} = X$ amd $t_{\infty} =y$.  Note that $\tau <1$ so that $(1-\tau)^{2} < (1-\tau^{2})$.  Then the assumption implies
\begin{equation*}
f(R_{i},t_{i}) \leq \theta f(R_{i+1},t_{i+1}) + \frac{A\tau^{-2i\alpha}}{(1-\tau)^{2\alpha}}\left[\frac{1}{(t_{0}-y)} + \frac{1}{(X-R_{0})^{2}} + B\right]^{\alpha}.  
\end{equation*}
Iterating this we obtain
\begin{equation*}
f(R_{0},t_{0}) \leq \theta^{k}f(R_{k},t_{k}) + \frac{A}{(1-\tau)^{2\alpha}}\left[\frac{1}{(t_{0}-y)} + \frac{1}{(X-R_{0})^{2}} + B\right]^{\alpha}\cdot \left(\sum_{i=0}^{k-1}\theta^{i}\tau^{-2i\alpha}\right)
\end{equation*}
By our choice of $\tau$, we can take the limit as $k\rightarrow \infty$ to prove the lemma.
\end{proof}
The proof of the theorem follows immediately.
\end{proof}

We can now prove the main theorem.
\begin{proof}[Proof of Theorem~\ref{ZanZalg}]
We begin by defining
\begin{equation*}
u := |\nabla \gamma|^{2}_{H_{t}} + |F^{S}|_{H_{t}}^{2} + |F^{Q}|_{H_{t}}^{2}+1
\end{equation*}
Since $\nabla \gamma$ is a component of the curvature, it follows that $|\nabla \gamma|^{2}_{H_{t}}$ is uniformly bounded in $L^{1}(X)$.  By the estimates in Proposition~\ref{prop: curv est}, on any compact set $K \subset Z_{alg}$ the function $u$ is uniformly bounded in $L^{1}(K)$, independent of time.  Using equations~\eqref{FS}~\eqref{FQ}~\eqref{nablagamma},  we compute
\begin{equation*}
\begin{aligned}
(\del_{t}-\Delta) u &\leq C_{2} |\nabla \gamma|^{2}(1+|\gamma|^{2} + |F^{S}|+|F^{Q}|)\\ &+ |\nabla \gamma|\left(|\gamma|^{2}+ |\gamma||\nabla F^{S}| + |\gamma||\nabla F^{Q}|\right) - |\nabla \nabla \gamma|^{2} - |\bar{\nabla}\nabla \gamma|^{2}\\
&+C_{3}(|F^{S}|^{2}+ |F^{S}|^{3}) + \left(|\nabla\nabla \gamma||\gamma| + |\nabla \gamma|^{2}\right)|F^{S}| - |\nabla F^{S}|^{2}\\
&+C_{4}(|F^{Q}|^{2}+ |F^{Q}|^{3}) + \left(|\nabla\nabla \gamma||\gamma| + |\nabla \gamma|^{2}\right)|F^{Q}| - |\nabla F^{Q}|^{2}\\
&\leq C_{2} |\nabla \gamma|^{2}(1+|\gamma|^{2} + |F^{S}|+|F^{Q}|)\\ &+ |\nabla \gamma||\gamma|^{2}+2|\nabla \gamma|^{2} |\gamma|^{2} + \frac{1}{2}\left(|\nabla F^{S}|^{2} + |\nabla F^{Q}|^{2}\right) - |\nabla \nabla \gamma|^{2} - |\bar{\nabla}\nabla \gamma|^{2}\\
&+C_{3}(|F^{S}|^{2}+ |F^{S}|^{3}) + \frac{1}{2}|\nabla\nabla \gamma|^{2} + 2|\gamma|^{2}|F^{S}|^{2} + |\nabla \gamma|^{2}|F^{S}| - |\nabla F^{S}|^{2}\\
&+ C_{4}(|F^{Q}|^{2}+ |F^{Q}|^{3}) + \frac{1}{2}|\nabla\nabla \gamma|^{2} + 2|\gamma|^{2}|F^{Q}|^{2} + |\nabla \gamma|^{2}|F^{Q}| - |\nabla F^{Q}|^{2}\\
&\leq P(|\gamma|, |F^{S}|, |F^{Q}|) u
\end{aligned}
\end{equation*}
where $P(x,y,z)$ denotes some algebraic function of $x,y,z$.  Fix any compact set $K\subset X \backslash Z_{alg}$.  Since $|F^{S}|, |F^{Q}|, |\gamma|$ are uniformly bounded in $L^{p}(K)$ for all $p\gg 1$, it follows that
\begin{equation*}
\sup_{t\in [0,\infty)}\|P(|\gamma|, |F^{S}|, |F^{Q}|)\|_{L^{100n}(K)}(t) \leq C(n,K).
\end{equation*}
We can now apply the Moser iteration.  Fix $x_{0} \in X\backslash Z_{alg}$ and set $2R=dist(x_{0},Z_{alg})$.  Suppose that there exists times $t_{j} \rightarrow \infty$ such that  $\sup_{B(x_{0}, \frac{R}{2})}u(x,t_{j}) \geq j$.  Set $u_{j} = u(t-(t_{j}+1))$.  Then the estimate in Theorem~\ref{moser} implies
\begin{equation*}
j\leq \sup_{B(x_{0}, \frac{R}{2})\times[1,2]} u_{j}\leq C(n,R)\left[1+\frac{1}{R^{2}}\right]^{n+1}\int_{0}^{2}\int_{B(x_{0},R)}u_{j} dVdt
\end{equation*}
Since $B(x_{0}, R) \Subset X\backslash Z_{alg}$, the $L^{1}$ norm on the right hand side is uniformly bounded.  Thus we have a contradiction for $j$ sufficiently large.
\end{proof}

\end{normalsize}

\newpage

\bigskip
\centerline{}

Tristan C. Collins

Department of Mathematics, Columbia University, New York, NY 10027

\bigskip

Adam Jacob

Department of Mathematics, Harvard University, Cambridge, MA 02138

\end{document}